\newtheorem{thm}{Theorem}[section]
\newtheorem{con}[thm]{Condition}
\newtheorem{cor}[thm]{Corollary}
\newtheorem{lem}[thm]{Lemma}
\newtheorem{prop}[thm]{Proposition}
\newtheorem{defn}[thm]{Definition}
\newtheorem{conj}[thm]{Conjecture}
\newtheorem*{thmA}{Theorem A}
\newtheorem*{thmB}{Theorem B}
\newtheorem*{scon}{Schanuel condition}
\newcommand{\R}{\mathbb R}
\newcommand{\qt}{\mathbb{Q}(\tau)}
\newcommand{\td}[2]{\textrm{td}_{\qt}({#2/#1})}
\newcommand{\tdo}[1]{\textrm{td}_{\qt}(#1)}
\newcommand{\cl}[1]{\textbf{cl}_{#1}}
\newcommand{\ld}[2]{\textrm{m.dim}_{\mathbb{Q}(\tau)}(#2/#1)}
\newcommand{\lf}[2]{\textrm{m.dim}_{\mathbb{Q}}(#2/#1)}
\newcommand{\lk}[2]{\textrm{m.dim}_{K}(#2/#1)}
\newcommand{\ldo}[1]{\textrm{m.dim}_{\mathbb{Q}(\tau)}(#1)}
\newcommand{\lfo}[1]{\textrm{m.dim}_{\mathbb{Q}}(#1)}
\newcommand{\Q}{\mathbb{Q}}
\newcommand{\Z}{\mathbb{Z}}
\newcommand{\N}{\mathbb{N}}
\newcommand{\oqt}{\qt^{ac}}
\def \<{\langle}
\def \>{\rangle}
\title[Real field with a power function and a subgroup]
 {The real field with an irrational power function and a dense multiplicative subgroup} 
\author{Philipp Hieronymi}
\thanks{This research was funded by \emph{Deutscher Akademischer
Austausch Dienst Doktorandenstipendium} and \emph{Engineering and
Physical Sciences Research Council}.}
\address{Mathematical Institute,
University of Oxford,
24-29 St Giles',
Oxford, OX1 3LB, United Kingdom }
\curraddr{\textsc{Department of Mathematics \& Statistics,
McMaster University, 1280 Main Street West
Hamilton, ON, L8S 4K1, Canada}}
    \email{P@hieronymi.de}
\begin{document}
 \maketitle
\begin{abstract}
This paper provides a first example of a model theoretically well behaved structure consisting of a proper o-minimal expansion of the real field and a dense multiplicative subgroup of finite rank. Under certain Schanuel conditions, a quantifier elimination result will be shown for the real field with an irrational
power function $x^{\tau}$ and a dense multiplicative subgroup of finite rank whose elements are algebraic over $\qt$.
Moreover, every open set definable in this structure is already definable in the reduct given by just the real field and the irrational power function.
\end{abstract}

\begin{section}{Introduction}
 Let $\tau \in \mathbb{R}\setminus \Q$. We will consider the multiplicative group $(\R_{>0},\cdot)$ as a $\qt$-linear space where  the multiplication is given by $a^q$ for every $q \in \qt$ and $a \in \R_{>0}$.

\begin{scon} \label{sc} Let $n \in \mathbb{N}$ and $a \in \mathbb{R}^n$, then
\begin{equation*}
\tdo{a} + \ldo{a} \geq \lfo{a},
\end{equation*}
where $\tdo{a}$ is the transcendence degree of $a$ over $\qt$, $\ldo{a}$ and $\lfo{a}$ are the dimensions of the $\mathbb{Q}(\tau)$- and $\mathbb{Q}$-linear subspaces of $\mathbb{R}_{>0}$ generated by $a$.
\end{scon}

\noindent Let $\overline{\mathbb{R}}=(\mathbb{R},<,+,\cdot,0,1)$ be the field
of real numbers and let $x
^{\tau}$ be the function
on $\mathbb{R}$ sending $t$ to $t^{\tau}$ for $t>0$ and to $0$ for $t \leq 0$. Let $\oqt$ be the algebraic closure of $\qt$. The main result of this paper is the following:

\begin{thmA}\label{introthm1} Let $\tau \in \mathbb{R}$ satisfy the Schanuel condition and let $\Gamma$ be a dense subgroup of $\mathbb{R}_{>0}$ of finite rank with $\Gamma \subseteq \oqt$. Then every definable set in $(\overline{\R},x^{\tau},\Gamma)$ is a boolean combination of sets of the form
\begin{equation*}
\bigcup_{g \in \Gamma^n} \{ x \in \mathbb{R}^m \ : \ (x,g) \in S\},
\end{equation*}
where $S \subseteq \R^{m+n}$ is definable in $(\overline{\R},x^{\tau})$. Moreover, every open set definable in $(\overline{\R},x^{\tau},\Gamma)$ is already definable in $(\overline{\R},x^{\tau})$.
\end{thmA}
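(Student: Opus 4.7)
The plan is to situate $(\overline{\R},x^\tau,\Gamma)$ in the paradigm of an o-minimal structure augmented by a small dense predicate, in the spirit of van den Dries--G\"unayd\i n's treatment of the real field with $2^\Z$ and Belegradek--Zilber's on the real field with a dense multiplicative subgroup. I take $(\overline{\R},x^\tau)$ as the base reduct, which by Miller's work is o-minimal and admits cell decomposition. The decisive algebraic input is the Schanuel condition; the first step I would aim for is an \emph{independence lemma} stating that if $a\in\R^m$ and $\gamma\in\Gamma^n$ with $\ld{a}{\gamma}=n$, then $\td{a}{\gamma}=n$. This converts multiplicative $\qt$-linear independence among elements of $\Gamma$ (visible from the group structure alone) into genuine algebraic independence over $\qt(a)$ (which $x^\tau$ can detect in the reduct), and the hypothesis $\Gamma\subseteq\oqt$ ensures there is no hidden transcendence contribution from $\Gamma$ beyond what is already governed by its rank.

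Given the independence lemma, I would carry out a back-and-forth / type analysis showing that $\textrm{tp}(a/\Gamma)$ in $(\overline{\R},x^\tau,\Gamma)$ is determined by $\textrm{tp}(a/\emptyset)$ in the reduct together with the full diagram of $\qt$-linear relations between the coordinates of $a$ and $\Gamma$. From this, standard manipulations --- a normal form for formulas in the expanded language, followed by saturation arguments --- yield the first clause of Theorem~A: every definable set of $(\overline{\R},x^\tau,\Gamma)$ is a boolean combination of sets of the form $\bigcup_{g\in\Gamma^n}\set{x\in\R^m:(x,g)\in S}$ with $S$ definable in $(\overline{\R},x^\tau)$.

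For the open-set clause I would use genericity: since $\Gamma$ has finite rank and lies in the countable field $\oqt$, it is countable, and hence any definable open $U\subseteq\R^m$ contains a point $a$ all of whose coordinates are algebraically independent from $\Gamma$ over $\qt$. Using the normal form, $a\in U$ forces a single $g\in\Gamma^n$ to witness the relevant $(a,g)\in S$, and near $a$ the description of $U$ collapses to the reduct-definable cell of $S$ containing $a$; an o-minimal covering argument then globalises this to show $U$ itself is definable in $(\overline{\R},x^\tau)$. The principal obstacle I anticipate is the independence lemma: translating the Schanuel inequality into a uniform-in-parameters algebraic-independence statement, while accounting for the constraints $x^\tau$ imposes on $\Gamma$, is where the arithmetic content of the hypothesis genuinely enters. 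A secondary delicate point is the local-to-global step for open sets, which requires ruling out exceptional $\Gamma$-dependent strata of lower o-minimal dimension.
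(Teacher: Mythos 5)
Your proposed independence lemma is false as stated. Since $\Gamma\subseteq\oqt$, every coordinate of $\gamma\in\Gamma^n$ is algebraic over $\qt$, so $\td{a}{\gamma}=0$ for every $a$, whereas you assert $\td{a}{\gamma}=n$. The Schanuel condition does not convert multiplicative $\qt$-linear independence among elements of $\Gamma$ into algebraic independence over $\qt(a)$; specialised to tuples from $\oqt$ it yields the opposite-flavoured statement $\ldo{\gamma}=\lfo{\gamma}$, that is, $\qt$-linear multiplicative relations among elements of $\Gamma$ already reduce to $\Q$-linear ones --- and this is exactly what the paper uses to verify axiom (A4). Your remark that $\Gamma\subseteq\oqt$ ensures no transcendence contribution from $\Gamma$ is correct, but it destroys your lemma rather than supporting it.

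The deeper gap is that algebraic independence is the wrong closure notion here: the power function $x^\tau$ makes the definable closure $\cl{T}$ of $T=\textrm{Th}(\overline{\R},x^\tau,\tau)$ strictly larger than field-theoretic algebraic closure, and taming its interaction with $G$ is the crux of the argument. What is actually needed is the identity $\cl{T}(H)\cap G=H$ for pure subgroups $H\supseteq\Gamma$ of $G$, and proving it requires two ingredients your sketch does not mention: the Jones--Wilkie description of $\cl{T}$ in terms of varieties over $\qt$ and $\qt$-tori of bounded total dimension, and a Mordell--Lang theorem for ``special pairs'' of varieties and $\qt$-tori, which is where the Schanuel condition and the Mann property for the $\qt$-linear span of $\Gamma$ enter. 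Your back-and-forth also omits tracking cosets of $G^{[s]}$ for $s\in\N$ --- data not recoverable from $\qt$-linear relations alone, and the place where finite rank (via finiteness of $|\Gamma:\Gamma^{[s]}|$) is used. Finally, the open-set clause is deduced in the paper from a general open-core criterion whose hypothesis is verified by a normal form for $(M,G)$-definable subsets of $G^n$; your genericity sketch is in the right spirit but does not engage with the lower-dimensional $\Gamma$-dependent strata, which is precisely what that normal form controls.
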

\noindent A finite rank subgroup of $\R_{>0}$ is a subgroup that is contained in the divisible closure of a finitely generated subgroup.
In fact, we will prove Theorem A not only for finite rank subgroups, but also for subgroups whose divisible closure has the Mann property (see page \pageref{mannproperty} for a definition of the Mann property). By work of Bays, Kirby and Wilkie in \cite{bkw} the Schanuel condition holds for co-countably many real numbers $\tau$. Assuming Schanuel's conjecture, the Schanuel condition also holds when $\tau$ is algebraic (see page \pageref{schanuelconj} for a statement of Schanuel's conjecture).\\

\noindent The significance of Theorem A comes from the fact that it produces the first example of a model theoretically well behaved structure consisting of a proper o-minimal expansion of the real field and a dense multiplicative subgroup of finite rank. So far it was only known by work of van den Dries and Günayd\i n in \cite{dense} that Theorem A holds if $(\overline{\R},x^{\tau})$ is replaced by $\overline{\R}$. In particular, by \cite{ayhanme2}, every open set definable in an expansion of the real field by a dense multiplicative subgroup $\Gamma$ of $\R_{>0}$ of finite rank is semialgebraic. However Tychonievich showed in \cite{michael} that the structure $(\overline{\mathbb{R}},\Gamma)$ expanded by the restriction of the exponential function to the unit interval defines the set of integers and hence every projective subset of the real line. Such a structure is as wild from a model theoretic view point as it can be. In contrast to this, every expansion of the real field whose open definable sets are definable in an o-minimal expansion, can be considered to be well behaved. For details, see Miller \cite{tame} and Dolich, Miller and Steinhorn \cite{dms}.\\

\noindent None of the assumptions of Theorem A can be dropped. By \cite{me}, Corollary 1.5, $(\overline{\R},x^{\tau},2^{\mathbb{Z}})$ defines the set of integers.
For $\tau=\log_2(3)$, the Schanuel condition fails. Since $(\overline{\R},x^{\log_2(3)},2^{\mathbb{Z}}3^{\mathbb{Z}})$ defines $2^{\mathbb{Z}}$, it also defines $\Z$. On the other hand, for a non-algebraic real number $\tau$ satisfying the Schanuel condition such that $2^{\tau}$ is not in $\oqt$, we have again that  $2^{\mathbb{Z}}$ is definable $(\overline{\R},x^{\tau},2^{\mathbb{Z}}2^{\tau\mathbb{Z}})$ and so is $\mathbb{Z}$.\\

\noindent However, Theorem A holds for certain multiplicative subgroups containing elements that are not algebraic over $\qt$.
\begin{thmB}\label{introthm2} Let $\tau \in \mathbb{R}$ satisfy the Schanuel condition, let $a_1,...,a_n\in \oqt$ and let $\Delta$ be the $\qt$-linear subspace of $(\R_{>0},\cdot)$ generated by $a_1,...,a_n$.
Then every definable set in $(\overline{\R},x^{\tau},\Delta)$ is a boolean combination of sets of the form
\begin{equation*}
\bigcup_{g \in \Delta^n} \{ x \in \mathbb{R}^m \ : \ (x,g) \in S\},
\end{equation*}
where $S \subseteq \R^{m+n}$ is definable in $(\overline{\R},x^{\tau})$. Moreover, every open set definable in $(\overline{\R},x^{\tau},\Delta)$ is already definable in $(\overline{\R},x^{\tau})$.
\end{thmB}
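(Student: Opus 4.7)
\emph{Proof plan for Theorem B.} The plan is to deduce Theorem B from the extension of Theorem A stated in the paragraph following it, which gives the same conclusion for any multiplicative subgroup of $\R_{>0}$ whose divisible closure has the Mann property. Since $\Delta$ is a $\qt$-linear subspace of $(\R_{>0},\cdot)$ it is already $\Q$-divisible, so its divisible closure equals $\Delta$ itself, and it suffices to verify that $\Delta$ has the Mann property. Density of $\Delta$ is automatic whenever $\Delta\neq\{1\}$, since any nontrivial $\Q$-divisible subgroup of $(\R_{>0},\cdot)$ is dense; the trivial case is immediate.

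A typical element of $\Delta$ has the form $a_1^{q_1}\cdots a_n^{q_n}$ with $q_i\in\qt$. Two features distinguish $\Delta$ from the finite-rank subgroups considered in Theorem A: first, $\Delta$ has infinite $\Q$-rank in general, since $1,\tau,\tau^2,\ldots$ are $\Q$-linearly independent when $\tau$ is transcendental; second, $\Delta$ is not contained in $\oqt$, because elements such as $a_i^{\tau}$ are typically transcendental over $\qt$. Hence the classical Evertse--Schlickewei--Schmidt finiteness theorem is not directly available, and the Mann property for $\Delta$ must be extracted from the Schanuel condition. The idea is to consider a hypothetical infinite family of non-degenerate solutions $(g_1^{(k)},\ldots,g_r^{(k)})$, for $k=1,\ldots,M$, to a fixed Mann equation $c_1 g_1+\cdots+c_r g_r=1$ with $g_j^{(k)}\in\Delta$ and $c_j\in\oqt$. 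The equation bounds $\tdo{a_1,\ldots,a_n,g^{(1)},\ldots,g^{(M)}}$ by $(r-1)M$, while the $\qt$-linear span of the combined tuple stays within that of $a_1,\ldots,a_n$, giving $\ldo{a_1,\ldots,a_n,g^{(1)},\ldots,g^{(M)}}\leq n$. Feeding these two bounds into the Schanuel inequality constrains the $\Q$-linear dimension of the combined tuple by $(r-1)M+n$, which for a suitably chosen independent family of distinct non-degenerate solutions is smaller than its actual growth, yielding the required contradiction.

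The main obstacle is precisely this Mann-property verification: turning the Schanuel counting above into a rigorous contradiction requires a careful selection of the putative family of solutions and a detailed analysis of the interaction between the $\qt$-linear data encoded in the exponent vectors $q_{j,i}$ and the $\Q$-linear data given by the actual logarithms in $\R$. Once the Mann property for $\Delta$ is in hand, the generalized form of Theorem A applies to $\Delta$, yielding both the boolean-combination representation of definable sets and the fact that every open definable set in $(\OR,x^{\tau},\Delta)$ is already definable in $(\OR,x^{\tau})$.
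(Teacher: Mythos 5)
There is a genuine gap in your plan, and you have in fact already spotted the ingredient that defeats it without drawing the consequence. The generalization of Theorem A announced in the paragraph after its statement relaxes ``finite rank'' to ``divisible closure has the Mann property'' but \emph{retains} the hypothesis $\Gamma \subseteq \oqt$. You correctly observe that $\Delta$ is not contained in $\oqt$ (since $a_i^{\tau}$ is typically transcendental over $\qt$), and this means the generalized Theorem A simply does not apply to $\Delta$, regardless of whether you can establish the Mann property for it. Concretely, the algebraicity hypothesis is not cosmetic: it is used to verify axiom (A4), which asserts that every basic $\qt$-torus that is not a $\Q$-torus is disjoint from $(G\setminus\{1\})^n$. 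For $\Delta$ this fails outright --- the pair $(a_1, a_1^{\tau})$ lies in $\Delta^2$ and on the $\qt$-torus $x^{\tau}=y$, which is not a $\Q$-torus. So the axiomatization $T_{\Gamma}(\Gamma)$ underlying Theorem A is not even satisfied when the predicate is interpreted as $\Delta$, and the back-and-forth machinery cannot be invoked as a black box.

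The paper's actual route to Theorem B is to set up a distinct axiomatization $T_{\Gamma}(\Delta)$, consisting of (A1), (A5)--(A8): axiom (A4) is dropped, the pureness requirement on the subgroup is replaced by closure under $\qt$-powers (axiom (A8)), and the back-and-forth system $\mathcal{S}$ is redefined with ``$H$ a $\qt$-linear subspace of $G$'' in place of ``$H$ a pure subgroup of $G$.'' The Main Lemma (Lemma \ref{mainlemma}), the back-and-forth Lemma \ref{bnf}, and the quantifier-elimination Theorem \ref{nearMC2} are then carried through with these adjustments, and the open-core statement follows from Theorem \ref{ocayhan} exactly as in the $\Gamma$ case. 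Your instinct that the Mann property of $\Delta$ matters is right --- it is established in Proposition \ref{mannfor2qt}(ii) under Condition \ref{schanuel} --- but it enters as one hypothesis of Theorem \ref{specialml} (needed for axiom (A5)), not as a ticket to cite the finite-rank Theorem A. Thus the plan as written cannot be completed: the statement ``once the Mann property for $\Delta$ is in hand, the generalized form of Theorem A applies to $\Delta$'' is false, and the missing work is precisely the modified axiomatization and back-and-forth argument.
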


\subsection{Acknowledgements} I am deeply indebted to my thesis supervisor Alex Wilkie. His
guidance, insights and encouragement made this work possible. I thank Martin Bays, Juan Diego Caycedo, Ayhan Günayd\i n, Chris Miller and Boris Zilber for helpful discussions on the topic of this paper.

\subsection{Coventions and notations} Above and in the rest of the paper $l,m,n$ always denote natural numbers. Also as usual `definable' means `definable with parameters' and when we want to make the language and the parameters explicit we write $\mathfrak{L}$-$B$-{\it definable} to mean definable in the appropriate $\mathfrak{L}$-structure using parameters from the set $B$. \\

\noindent In all instances, $K$ will be either $\Q$ or $\Q(\tau)$ and $\Gamma$ will always denote a multiplicative subgroup of $\R_{>0}$. Further, every linear space considered in this paper will be a linear subspace of $(M_{>0},\cdot)$ and \emph{not} of $(M,+)$, where $M$ is a real closed field. In the case that $M_{>0}$ is a $K$-linear space, we will write $\lk{S_0}{S_1}$ for the $K$-linear dimension of the quotient linear space of the $K$-linear space generated by $S_0 \cup S_1$ and $S_0$, where $S_0,S_1\subseteq M_{>0}$. \\

\noindent For a given variety $W$, we will write $\dim W$ for its dimension. For sets $X_0,X_1$ in a field extending $\qt$ we will write $\td{X_0}{X_1}$ for the transcendence degree of the field extension $\qt(X_1\cup X_0)/\qt(X_0)$.\\

\noindent As usual, for any subset $S \subseteq X \times Y$ and $x \in X$, we write $S(x)$ for the set
\begin{equation*}
\{ y \in Y \ : \ (x,y)\in S \}.
\end{equation*}
For a subset $S \subseteq X^n$, $x \in S$ and a projection $\pi: X^n \to X^l$, we write $S(\pi(x))$ for the set
\begin{equation*}
\{ y \in S : \pi(y) = \pi(x) \}.
\end{equation*}

\subsection{O-minimality}
\noindent Let $\tau \in
\mathbb{R}\setminus \mathbb{Q}$ and let $x
^{\tau}$ be the function
on $\mathbb{R}$ sending $t$ to $t^{\tau}$ for $t>0$ and to $0$ for $t \leq 0$. In this paper we consider the structure $(\overline{\R},x^{\tau},\tau)$. We write $T$ for its theory and $\mathfrak{L}$ for its language. In \cite{miller} Miller showed that the
theory $T$ is o-minimal and model complete. \\

\noindent In the rest of this paper only the following facts about the o-minimality of $T$ will be used:\\

\noindent Let $M$ be a model of $T$. A definable subset $C$ of $M^n$ is a \emph{cell}, if for some projection $\pi: M^n \to M^m$, $\pi$ is a homeomorphism of $C$ onto its image and $\pi(C)$ is open. Since $T$ is o-minimal, every definable set $X\subseteq M^n$ is a finite union of cells which are defined over the same parameter set.\\

\noindent Let $A$ be any subset of $M$. We write $\cl{T}(A)$ for the definable closure of $A$ in $M$. By o-minimality of $T$, $\cl{T}(A)$ is itself a model of $T$. Moreover, the function $\cl{T}(-)$ is a \emph{pregeometry}; that is for every $A \subseteq M$, $a \in A$ and $b\in M$

\begin{itemize}
  \item [(i)] $A \subseteq \cl{T}(A)$,
  \item [(ii)] $b \in \cl{T}(A)$ iff $b \in \cl{T}(A_0)$, for some finite $A_0\subseteq A$,
  \item [(iii)] $\cl{T}(\cl{T}(A))=\cl{T}(A)$,
  \item [(iv)] if $b \in \cl{T}(A)\setminus \cl{T}(A\setminus \{a\})$, then $a \in \cl{T}\big((A\setminus \{a\})\cup\{b\}\big)$.
\end{itemize}
Property (iv) is called \emph{the Steinitz exchange principle}.\\
\noindent For two subsets $A,B \subseteq M$, we will say that $A$ is \emph{$\cl{T}$-independent} over $B$ if for every $a\in A$ $$a \notin \cl{T}\big(B \cup (A\setminus \{a\})\big).$$\\
\noindent Let $M,M'$ be two models of $T$, let $N\preceq M, N' \preceq M'$ and let $\beta: N \to N'$ be an $\mathfrak{L}$-isomorphism. Let $a \in M$ and $b\in M'$ be such that $a < c$ iff $b < \beta(c)$, for all $c\in N$. Then there is an $\mathfrak{L}$-isomorphism $\beta': \cl{T}(N,a) \to \cl{T}(N',b)$ extending $\beta$ and sending $a$ to $b$.

\end{section}

\begin{section}{A Schanuel condition and the Mann property}
 Let $\tau \in \mathbb{R}$. As above, we will consider $(\R_{>0},\cdot)$ as a $\qt$-linear space. For $a \in \R_{>0}^n$, we write $\ldo{a}$ and $\lfo{a}$ for the dimensions of the $\mathbb{Q}(\tau)$- and $\mathbb{Q}$-linear subspaces of $\mathbb{R}_{>0}$ generated by $a$.

\begin{con} \label{schanuel} Let $n \in \mathbb{N}$ and $a \in \mathbb{R}^n$, then
\begin{equation*}
\tdo{a} + \ldo{a} \geq \lfo{a}.
\end{equation*}
\end{con}

\noindent This condition has been analysed in \cite{bkw}. The main theorem of \cite{bkw} states that Condition \ref{schanuel} holds for co-countably many real numbers.

\begin{thm}{(\cite{bkw} Theorem 1.3)} Let $\tau \in \mathbb{R}$. If $\tau$ is not $\emptyset$-definable in $(\overline{\mathbb{R}},\exp)$, then Condition \ref{schanuel} holds.
\end{thm}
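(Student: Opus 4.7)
The plan is to prove the contrapositive: assume Condition \ref{schanuel} fails for some $\tau\in\R$, and deduce that $\tau$ must be $\emptyset$-definable in $(\OR,\exp)$. Suppose $a\in\R_{>0}^n$ witnesses the failure, so $\tdo{a}+\ldo{a}<\lfo{a}$. Setting $b_i:=\log a_i$ transfers the condition to the additive picture: $\lfo{a}$ and $\ldo{a}$ become the $\Q$- and $\qt$-linear dimensions of $b=(b_1,\dots,b_n)$ inside $(\R,+)$, while $\tdo{a}=\tdo{\exp(b)}$. After choosing $n$ minimal, I may assume $b_1,\dots,b_n$ are $\Q$-linearly independent and that $b_1,\dots,b_k$ form a $\qt$-basis of their $\qt$-span; each remaining $b_{k+j}$ then satisfies a relation $b_{k+j}=\sum_{i=1}^k r_{ji}b_i$ with $r_{ji}\in\qt$, which exponentiates to $\exp(b_{k+j})=\prod_{i=1}^k \exp(b_i)^{r_{ji}}$.

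Next I would convert this violation into a $\emptyset$-definable constraint on $\tau$. The bound $\tdo{a}<n-k$, together with the relations above, forces the point $(b_1,\dots,b_k,\exp b_1,\dots,\exp b_k)$ into an algebraic subvariety $V\subseteq\R^{2k}$ defined over $\qt$ of dimension strictly less than $k$. By Ax's theorem for the exponential, such an atypical intersection of $V$ with the graph of $\exp$ forces the rational data $(r_{ji})$ and the polynomials cutting out $V$ -- and hence $\tau$ -- to satisfy polynomial identities. For each fixed combinatorial shape (the pair $(n,k)$, a bounded-complexity choice of $V$, and bounded denominators for the $r_{ji}$) the set of $\tau$ admitting such a witness is then $\emptyset$-definable in $(\OR,\exp)$ by a single existential formula, and there are only countably many shapes to consider.

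O-minimality of $(\OR,\exp)$, due to Wilkie, then closes the argument. Each such $\emptyset$-definable subset of $\R$ is a finite union of points and open intervals, and Ax-Schanuel rules out the interval case: a generic $\tau$ inside an interval of bad parameters would inherit enough algebraic independence from the countable set of $\emptyset$-definable reals of $(\OR,\exp)$ to contradict the existence of a counterexample of the prescribed shape. Hence each such set is finite and $\emptyset$-definable, so $\tau$ itself is $\emptyset$-definable. The main obstacle is exactly this last generic-$\tau$ step: one has to make precise that varying $\tau$ across an interval while keeping the combinatorial shape fixed forces the witness $a$ to vary definably and violates Ax's theorem in the limit. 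This demands a careful minimalisation and a dimension count in the exponential pregeometry of $(\OR,\exp)$, and that is where I expect most of the work to go.
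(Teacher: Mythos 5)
The paper does not prove this statement: it is quoted verbatim as Theorem~1.3 of Bays, Kirby and Wilkie \cite{bkw}, and the text around it only records the consequence that Condition~\ref{schanuel} holds for co-countably many $\tau$. So there is no in-paper proof to compare your sketch against; it has to be judged against the argument in \cite{bkw} itself.

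Your sketch assembles the right ingredients (Ax's theorem, Wilkie's o-minimality of $(\OR,\exp)$, the contrapositive formulation), but its overall architecture is different from \cite{bkw}, and the step you yourself flag as the hard one is where the argument currently breaks. The route in \cite{bkw} does not exhibit the bad set of $\tau$'s as a countable union of finite $\emptyset$-definable sets. Instead it identifies the reals $\emptyset$-definable in $(\OR,\exp)$ with the exponential algebraic closure $\textrm{ecl}(\emptyset)$, uses Ax's theorem to show that $\textrm{ecl}$ is a pregeometry with exchange, and then, for $\tau\notin\textrm{ecl}(\emptyset)$, carries out a direct dimension count in that pregeometry to obtain $\tdo{a}+\ldo{a}\geq\lfo{a}$. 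The pregeometric reading of ``$\emptyset$-definable'' is the engine of the proof and is missing from your plan.

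The concrete gap is the interval-exclusion step, which as stated is circular: you appeal to a generic $\tau$ inheriting ``enough algebraic independence \dots to contradict the existence of a counterexample of the prescribed shape,'' but that generic non-existence is exactly what the theorem asserts and cannot be presupposed. There are further difficulties upstream. A ``shape'' must encode a variety over $\qt$ and $\qt$-linear relations among the $b_i$, and both depend on $\tau$; the existential formula you would write (``$\exists b$ with $(b,\exp b)\in V_\tau$'') does not by itself say that $V_\tau$ is the minimal variety for $b$, that $L$ is the minimal torus, or that $\lfo{a}=n$, and none of those minimality conditions is first-order. Without a minimalisation lemma that lets you replace those infinitary conditions by a countable family of genuinely first-order existential formulas whose union still covers every violation, the o-minimality step has no well-defined definable set to act on, and even with it you still need a non-circular reason that each of those sets is finite. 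The ingredients are right, but the proof is not there; the known route is the pregeometric one in \cite{bkw}.
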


\noindent It is not known whether there is any other irrational number $\tau$ such that Condition \ref{schanuel} holds. However it follows easily from a famous open conjecture of Schanuel, that every \emph{algebraic} real number $\tau$ satisfies Condition \ref{schanuel}. The conjecture states as follows.

\begin{conj}{(Schanuel's Conjecture)}\label{schanuelconj} Let $n \in \mathbb{N}$ and $a \in \mathbb{R}^n$, then
\begin{equation*}
\textrm{td}_{\Q}(a,\exp(a)) \geq \lfo{\exp(a)}.
\end{equation*}
\end{conj}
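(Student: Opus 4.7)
The statement to be established is Schanuel's conjecture itself, one of the most famous and stubbornly open problems in transcendental number theory. I therefore have no plan that actually settles it; what I can offer is an honest account of the known special cases, the route any attempted proof would have to take, and the point at which every such route is currently blocked.

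The first step is to dispose of the cases where classical transcendence theory already applies. If $n=1$ and $a_1$ is a nonzero algebraic number, the conclusion is Hermite--Lindemann; if $n$ is arbitrary and all the $a_i$ are algebraic, the conclusion is Lindemann--Weierstrass. Both proofs run through the Gelfond--Schneider auxiliary-function template: choose $F(z) = \sum_i P_i(z) e^{\alpha_i z}$ with the $P_i$ having integer coefficients, arrange that $F$ vanishes to high order at many integer points, estimate $F$ analytically at a large point by the maximum modulus principle, and estimate it arithmetically from below using integrality of its Taylor data to force a contradiction. The same template, pushed considerably further by Baker in his theorem on linear forms in logarithms, yields Schanuel in the case that each $a_i$ is a logarithm of an algebraic number.

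The real difficulty begins once the $a_i$ are permitted to be genuinely transcendental. Here the auxiliary function carries no controllable denominators, so the integrality step, which is where every known transcendence argument gains its crucial edge, has nothing to lean on. The only structurally different program I am aware of is Zilber's: one builds an abstract ``pseudo-exponential'' field $\mathbb{B}$ in which the Schanuel inequality holds by fiat, proves categoricity in some uncountable cardinal, and attempts to identify $\mathbb{B}$ with $(\mathbb{C},\exp)$. That last identification is, however, equivalent to Schanuel itself, so nothing unconditional emerges. The hard part, and the part I would not know how to circumvent, is precisely to separate arbitrary $\Q$-linearly independent tuples from their exponentials without any algebraicity input from which to seed a transcendence argument.
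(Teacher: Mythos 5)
You are correct to refuse to supply a proof: the statement is Schanuel's Conjecture, which remains open, and the paper itself presents it only as a labeled \emph{conjecture} (Conjecture~\ref{schanuelconj}) with no proof offered. The paper invokes it purely as a hypothesis, noting that it would imply Condition~\ref{schanuel} for algebraic $\tau$, and relies instead on the unconditional result of Bays--Kirby--Wilkie for co-countably many $\tau$. Your survey of the known special cases (Hermite--Lindemann, Lindemann--Weierstrass, Baker, and the Zilber pseudo-exponential program) and of the precise obstruction --- the loss of integrality control once the $a_i$ are genuinely transcendental --- is accurate and is the appropriate response to being asked to prove an open problem. There is no gap to report because there is no proof, in your submission or in the paper, and no proof is expected.
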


\subsection{The Mann property}
\label{mannproperty}
In this section we consider the Mann property and its connection to Condition \ref{schanuel}. Let $F$ be a field, $E$ be a subfield of $F$ and $G$ be any
subgroup of the multiplicative group $F^{\times}$. Consider
equations of the form
\begin{equation}\label{mannequation}
a_1 x_1 + ... + a_n x_n = 1,
\end{equation}
where $a_1,...,a_n \in E$. We say a solution $(g_1,...,g_n)\in G^n$ of \eqref{mannequation}
 is \emph{non-degenerate} if for every
non-empty subset $I$ of $\{1,...,n\}$, $\sum_{i \in I} a_ig_i \neq
0$. Further we say that $G$ has the \emph{Mann property over $E$} if every
equation of the above type \eqref{mannequation} has only finitely many non-degenerate
solutions in $G^n$. We also call an element $g \in G^n$ a \emph{Mann
solution of $G$ over $E$} if it is a non-degenerate solution in $G^n$ of an equation of
the form \eqref{mannequation}.\\
\noindent  In fact, it follows from work of Evertse in
\cite{Evertse} and van der Poorten and Schlickewei in \cite{Mann}
that every finite rank multiplicative subgroup of a field of characteristic 0 has the Mann property over $\Q$.
Combining this with \cite{dense}, Proposition 5.6, we get the following theorem.

\begin{thm}\label{mannproperty} Every finite rank multiplicative subgroup of $\R_{>0}$ has the Mann property over $\qt$.
\end{thm}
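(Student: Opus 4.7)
The plan is a straightforward two-step combination of the two external results cited immediately before the statement. First I would invoke the theorem of Evertse \cite{Evertse} and van der Poorten--Schlickewei \cite{Mann}: every finite rank multiplicative subgroup of the multiplicative group of a field of characteristic zero has the Mann property over $\Q$. Applied to our $\Gamma\subseteq\R_{>0}$, viewed as a subgroup of $\R^{\times}$, this yields that $\Gamma$ has the Mann property over $\Q$. At this stage we know that every equation $b_1x_1+\cdots+b_nx_n=1$ with $b_i\in\Q$ has only finitely many non-degenerate solutions in $\Gamma^n$.

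The content of the theorem is the upgrade of the coefficient field from $\Q$ to $\qt$. For this I would invoke Proposition 5.6 of \cite{dense}, which is a general transfer principle designed precisely to promote the Mann property from a small coefficient field to a larger one. All that remains is to check that the hypotheses of that proposition are satisfied by the pair $(\Gamma,\qt)$; these hypotheses concern only the field extension $\qt/\Q$ together with the embedding of $\Gamma$ in $\R_{>0}$, and should hold irrespective of whether $\tau$ is algebraic or transcendental over $\Q$.

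The single real obstacle is therefore the correct invocation of Proposition 5.6 of \cite{dense}, that is, verifying that its hypotheses apply in the present setting; no new combinatorial or Diophantine content needs to be produced. Accordingly I expect the proof in the paper to consist essentially of one sentence which names the Evertse--van der Poorten--Schlickewei theorem and one sentence which applies Proposition 5.6 of \cite{dense}.
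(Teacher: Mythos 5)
Your proposal matches the paper's argument exactly: the paper derives the Mann property over $\Q$ from the Evertse and van der Poorten--Schlickewei results and then upgrades the coefficient field to $\qt$ via Proposition 5.6 of \cite{dense}, just as you describe. Your prediction that the paper's proof is essentially a two-sentence combination of these citations is accurate.
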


\noindent We conclude this section by showing that under Condition \ref{schanuel} the $\qt$-linear space generated by a \emph{divisible} multiplicative subgroup $\Gamma$ has the Mann property over $\qt$, if $\Gamma$ has the Mann property over $\qt$ and every element of $\Gamma$ is algebraic over $\qt$.

\begin{prop}\label{mannfor2qt} Assume Condition \ref{schanuel} holds for $\tau$. Let $\Gamma$ be a divisible multiplicative subgroup of $\mathbb{R}_{>0}$ with $\Gamma \subseteq \oqt$ and $\Delta$ be the $\qt$-linear subspace of $\R_{>0}$ generated by $\Gamma$. Then
\begin{itemize}
\item[(i)] every  Mann solution of $\Delta$ over $\qt$ is in $\Gamma$ and
\item[(ii)] $\Delta$ has the Mann property over $\qt$, if $\Gamma$ has the Mann property over $\qt$.
\end{itemize}
\end{prop}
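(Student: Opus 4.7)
The plan is to obtain (ii) as a quick consequence of (i) together with the Mann property of $\Gamma$: any nondegenerate Mann solution of $\Delta$ over $\qt$ lies in $\Gamma^n$ by (i), and $\Gamma$ has the Mann property over $\qt$ by hypothesis, so there can be only finitely many such solutions.

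The substantive work is (i). I would first use the divisibility of $\Gamma$ to write every element of $\Delta$ as $g h^{\tau}$ with $g, h \in \Gamma$: for $\prod_j \gamma_j^{r_j + s_j \tau} \in \Delta$ one takes $g = \prod_j \gamma_j^{r_j}$ and $h = \prod_j \gamma_j^{s_j}$, both in $\Gamma$ by divisibility. Given a nondegenerate Mann solution $(d_1, \dots, d_n)$ of $\sum a_i x_i = 1$ with $a_i \in \qt$, write $d_i = g_i h_i^{\tau}$ and collect the terms with identical $h_i$ to obtain an equivalent nondegenerate equation $\sum_{l=1}^m \tilde b_l h_l^{\tau} = 1$ with pairwise distinct $h_l \in \Gamma$ and nonzero $\tilde b_l \in \oqt$. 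Next I would record the preliminary lemma: if $h \in \Gamma$ and $h^{\tau} \in \oqt$, then $h = 1$. Indeed, applying the Schanuel condition to $h, h^{\tau}$, both are algebraic over $\qt$ so the transcendence degree is $0$, and assuming $h \neq 1$ one has $\ldo{h, h^{\tau}} = 1$ while $\lfo{h, h^{\tau}} = 2$ (the latter because $\tau \notin \Q$ and $h \neq 1$ force $\Q$-linear independence of $\{h, h^{\tau}\}$), so the inequality reads $0 + 1 \geq 2$, a contradiction. Since $g_i \in \Gamma$ always, proving $d_i \in \Gamma$ for all $i$ amounts to showing $h_l = 1$ for every $h_l$ appearing in the reduced equation.

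The plan is now to rule out the reduced equation having any $h_l \neq 1$. The case $m = 1$ is immediate from the preliminary lemma. For $m \geq 2$, since the $h_l$ are distinct at most one equals $1$; if $h_1 = 1$, nondegeneracy forces $1 - \tilde b_1 \neq 0$, and dividing yields a nondegenerate equation of the same form with $m - 1 \geq 1$ terms and now all $h_l \neq 1$. It therefore suffices to rule out the case $m \geq 1$ with every $h_l \neq 1$. To do so, choose a $\Q$-basis $h_1', \dots, h_{m'}'$ (taken in $\Gamma$ by divisibility) of the $\Q$-span of $\{h_l\}$ in $(\R_{>0}, \cdot)$; then each $h_l$ is a rational power-product of the $h_j'$. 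Substituting into the reduced equation and clearing denominators produces a nontrivial Laurent polynomial equation over $\oqt$ in $(h_1')^{\tau}, \dots, (h_{m'}')^{\tau}$, which gives $\tdo{(h_1')^{\tau}, \dots, (h_{m'}')^{\tau}} \leq m' - 1$. A short sub-argument using the preliminary lemma shows that the $h_j'$ are $\qt$-linearly independent: any $\qt$-relation $\prod_j (h_j')^{p_j + q_j \tau} = 1$ yields $v := \prod_j (h_j')^{q_j} \in \Gamma$ with $v^{\tau} \in \Gamma$, hence $v = 1$ by the preliminary lemma, and then $(q_j) = 0$ and $(p_j) = 0$ by $\Q$-independence of the $h_j'$. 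Consequently the tuple $(h_1', (h_1')^{\tau}, \dots, h_{m'}', (h_{m'}')^{\tau})$ has $\qt$-linear dimension $m'$ and $\Q$-linear dimension $2m'$, so the Schanuel condition yields $(m' - 1) + m' \geq 2m'$, which is impossible.

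The step I expect to be the main obstacle is the interplay between the $\qt$- and $\Q$-linear structures on $\Gamma$ and $\Delta$ when passing from the grouped Mann equation to a transcendence-degree bound on a $\qt$-independent generating set. The preliminary lemma is the tool that tames this interplay by ruling out ``mixed'' $\qt$-relations in $\Gamma$ (those with nontrivial $\tau$-coefficients), letting us work with a pure $\Q$-basis that also serves as a $\qt$-basis.
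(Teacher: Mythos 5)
Your proof is correct. It is close in spirit to the paper's argument but organized around a different decomposition, and it is worth recording the contrast. The paper fixes a $\Q$-multiplicative basis $h\in\Gamma^m$ whose $\qt$-span contains the solution $g$, lets $g'$ be a maximal subtuple of $g$ that is $\Q$-multiplicatively independent over $h$, and applies the Schanuel condition to the tuple $(h,g')$: the Mann relation is invoked to give $\td{h}{g'}<k$, while $\ldo{h,g'}\le m$ and $\lfo{h,g'}=m+k$, a contradiction unless $k=0$. You instead exploit divisibility to factor each coordinate uniquely as $d_i=g_ih_i^{\tau}$ with $g_i,h_i\in\Gamma$ (uniqueness being exactly your preliminary lemma, which is a clean one-line instance of Schanuel that the paper uses only implicitly, e.g.\ in verifying axiom (A4)), group the Mann relation by the $h_i$-value, and then apply Schanuel to $\bigl(h_1',(h_1')^{\tau},\dots,h_{m'}',(h_{m'}')^{\tau}\bigr)$ for a $\Q$-basis $h'$ of the $h_l$'s, getting $\tdo{\cdot}\le m'-1$, $\ldo{\cdot}=m'$, $\lfo{\cdot}=2m'$. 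The two Schanuel computations are structurally the same (transcendence degree drops by one via the Mann relation; $\Q$-dimension exceeds $\qt$-dimension by the number of genuine $\tau$-powers present), but the tuples differ. What your version buys: it isolates the preliminary lemma as a reusable fact, and it makes explicit, via the Laurent-polynomial-with-distinct-monomials argument, why the algebraic relation extracted from the Mann equation is nontrivial — a point the paper's proof asserts (``By \eqref{manneq} and \eqref{eqirr}, $\td{h}{g'}<k$'') without spelling out. What the paper's version buys: it is shorter and avoids the case analysis on whether some $h_l$ equals $1$, since the maximality of $g'$ absorbs that bookkeeping. One tiny imprecision in your write-up: after substituting $h_l=\prod_j(h_j')^{r_{lj}}$ with $r_{lj}\in\Q$, you obtain a relation in \emph{rational} powers of the $(h_j')^{\tau}$; one should pass to a common $N$-th root $u_j=\bigl((h_j')^{\tau}\bigr)^{1/N}$ to literally get a Laurent polynomial, which is harmless since $\tdo{u}=\tdo{(h')^{\tau}}$ — but this deserves a word.
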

 \begin{proof} It is enough to show (i). Therefor let $a_1,...,a_n \in \qt$ and let $g=(g_1,...,g_n) \in \Delta^n$ be such that
 \begin{equation}\label{manneq}
a_1 g_1 + ... + a_n g_n = 1
 \end{equation}
and for all $I \subseteq \{1,...,n\}$ \begin{equation}\label{eqirr}
\sum_{i\in I} a_i g_i \neq 0.
\end{equation}
We will show that $g \in \Gamma^n$.\\
\noindent Let $h \in \Gamma^m$ be such that $\ld{h}{g}=0$ and $\lfo{h}=m$. Let $k$ be the maximal natural number such that there is a subtuple $g'$ of $g$ of length $k$ such that $\lf{h}{g'}=k$. It just remains to verify that $k=0$. For a contradiction, suppose that $k>0$. By \eqref{manneq} and \eqref{eqirr}, we have that
\begin{equation*}
\td{h}{g'} < k.
\end{equation*}
Since every coordinate of $h$ is algebraic over $\qt$,
\begin{equation*}
\tdo{h,g'} + \ldo{h,g'} < k + m = \lfo{h,g'}.
\end{equation*}
This contradicts Condition \ref{schanuel}.
\end{proof}

\end{section}
\begin{section}{Tori and special pairs}

Let $M$ be a model of $T$. In the following we will consider $(M_{>0},\cdot)$ as a $K$-linear space where $K$ is either $\mathbb{Q}$ or $\qt$ and the multiplication is given by $a^q$ for every $q \in K$ and $a \in M_{>0}$.

\begin{defn} A basic \emph{$K$-torus} $L_0$ of $M^n$ is the set of solutions of equations of the form
\begin{align*}
 x_1^{p_{1,1}} \cdot ... \cdot  x_n^{p_{1,n}} &= 1,\\
\vdots   \ \ \  \  \ \ \ \ \ \ & \vdots\\
x_1^{p_{l,1}} \cdot ... \cdot  x_n^{p_{l,n}} &= 1,
\end{align*}
where $p_{i,j} \in K$.\\
For $b\in M^m$, a \emph{$K$-torus} $L$ over $b$ is a subset of $M_{>0}^n$ of the form $L_0(b)$, for some basis $K$-torus $L_0$ of $M_{>0}^{m+n}$. We will write $\dim L$ for the dimension of $L$ which is given by the corank of the matrix $(p_{i,j})_{i=1,...,l,j=m+1,...,m+n}$.
\end{defn}

\noindent The dimension of a torus and the linear dimension of a tuple in $M_{>0}$ correspond to each other. Let $a \in M^n$, $b \in M^m$ and let $L$ be the minimal $\qt$-torus over $b$ containing $a$. Then
$$
\dim L = \ld{b}{a}.
$$
For the following, the reader is reminded that for a set $S \subseteq M^n$, $y \in S$ and a projection $\pi: M^n \to M^l$
we write $S(\pi(y))$ for the set
$$
                                \{ z \in S \ : \ \pi(z)=\pi(y)\}.
$$

\begin{defn}\label{defspecial} Let $W\subseteq M^n$ be a variety and let $L$ be a $\qt$-torus. The pair $(W,L)$ is called \emph{special}, if $n=0$ or
\begin{equation*}
\dim W(\pi(y)) + \dim L(\pi(y)) < n-l,
\end{equation*}
for every point $y \in W\cap L$ and every projection $\pi : M^n \to M^l$, where $l\in \{0,...,n\}$.
\end{defn}
\noindent Note that the notion of specialness is first order expressible. In particular, for given variety $W\subseteq M^{m+n}$ and $\qt$-torus $L\subseteq M^{m+n}$, the set
$$
\{ a \in M^m \ : \ (W(a),L(a)) \textrm{ is special } \}
$$
is $\mathfrak{L}$-definable, where $\mathfrak{L}$ is the language of $(\overline{\R},x^{\tau},\tau)$.

\subsection{A Mordell-Lang Theorem for special pairs}
Let $\Gamma$ be a multiplicative subgroup of $\mathbb{R}_{>0}$ such that the divisible closure of $\Gamma$ has the Mann property over $\qt$ and $\Gamma$ is a subset of $\oqt$, the algebraic closure of $\qt$. Further,
\begin{equation*}
\textrm{ \emph{let $\Delta$ be the $\qt$-linear subspace of $\R_{>0}$ generated by $\Gamma$.}}
\end{equation*}
In this subsection we will prove the following theorem about special pairs defined over parameters from $\Delta$. Its statement is similar to a conjecture of Mordell and Lang.

\begin{thm}\label{specialml} Assume Condition \ref{schanuel}. Let $W\subsetneq \mathbb{R}^{l+n}$ be a variety defined over $\qt$ and let $L\subseteq \mathbb{R}^{l+n}$ be a basic $\qt$-torus. Then there are finitely many basic $\mathbb{Q}$-tori $L_1,...,L_m$ and $g_1,...,g_m\in \Gamma^{l+n}$ such that
\begin{equation}\label{specialmleq}
\{(h,y) \in \Delta^{l} \times \mathbb{R}^n : \ (W(h),L(h)) \textrm{\emph{ is special }}\}\cap W  \subseteq \bigcup_{i=1}^{m} g_i \cdot L_i.
\end{equation}
and $\dim L_i(z)<n$, if $n>0$, for every $z \in \R^{l}$ and $i=1,...,m$.
\end{thm}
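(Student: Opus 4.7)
The plan is to proceed by induction on $n$, combining Condition \ref{schanuel} with the Mann property of $\Delta$ established in Proposition \ref{mannfor2qt}. The case $n = 0$ follows directly by applying the Mann property to the $\qt$-polynomial equations defining $W$ evaluated at a point $h \in \Delta^l$.

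For the inductive step, fix $(h, y) \in (\Delta^l \times \R^n) \cap W$ such that $(W(h), L(h))$ is special. Since $L$ is a basic $\qt$-torus and $h \in \Delta^l$, the multiplicative equations cutting out $L(h)$ allow us to express $n - \dim L(h)$ of the coordinates of $y$ as $\qt$-power products of the remaining ones, up to multiplicative constants lying in $\Delta$. Substituting these relations into the $\qt$-defining polynomials of $W$ yields, after dividing by a suitable monomial, a nontrivial equation of the form
\[
\sum_{k=1}^{N} a_k(h)\, y^{q_k} = 1,
\]
with $a_k(h) \in \Delta$ and $q_k \in \qt^{\dim L(h)}$.

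I then distinguish two cases. If some nonempty proper subsum of the displayed equation vanishes, the resulting multiplicative relation places $(h, y)$ in a proper subtorus $L' \subsetneq L$, and applying the inductive hypothesis to $W \cap L'$ (whose ambient torus has strictly smaller dimension) supplies the required finite cover. If, on the contrary, no proper subsum vanishes, the equation is a non-degenerate Mann equation for $\Delta$ over $\qt$; combining Theorem \ref{mannproperty} with Proposition \ref{mannfor2qt} then yields finitely many solutions and forces the tuple $(a_1(h) y^{q_1}, \ldots, a_N(h) y^{q_N})$ to lie in a finite subset of $\Gamma^N$, producing the basepoints $g_1, \ldots, g_m$ together with the required $\Q$-tori $L_i$ (with $\dim L_i(z) < n$ built in from the strict containment $L' \subsetneq L$).

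The main obstacle will be making sense of this reduction when $y$ itself is not contained in $\Delta^n$, since the Mann property is formulated for solutions inside $\Delta$. Here the specialness hypothesis becomes essential: together with Condition \ref{schanuel} applied to $(h, y)$, it forces $\td{h}{y}$ to match the dimension of the generic fiber of $W(h) \cap L(h)$, which in turn forces the ratios $y^{q_k}/y^{q_1}$ appearing in the normalized equation to lie in $\Delta$, and in fact in $\Gamma$ by Proposition \ref{mannfor2qt}(i). Carrying out this transcendence/linear-dimension bookkeeping carefully, and verifying that the specialness hypothesis either propagates under passage to a proper subtorus $L'$ or can be replaced at each stage with a suitable weaker condition that still drives the induction, will be the technical heart of the argument.
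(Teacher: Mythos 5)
Your proposal circles the right ideas but gets the architecture backwards, and as a result leaves the actual mathematical content unproved. The paper factors the argument into two clean steps: (i) Lemma~\ref{specialindelta}, which shows that if $(W(h),L(h))$ is special, $h\in\Delta^l$, and $(h,y)\in W\cap L$, then Condition~\ref{schanuel} forces $y\in\Delta^n$; and (ii) an application of the already-established Mordell--Lang consequence of the Mann property (Lemma~\ref{mordelllangfact}, together with Proposition~\ref{mannfor2qt} to pull the basepoints into $\Gamma$) to the set $W\cap\Delta^{l+n}$. Once $y\in\Delta^n$ is in hand, \emph{no} induction, no by-hand manipulation of Mann equations, and no subsum case analysis is needed; Lemma~\ref{mordelllangfact} is invoked as a black box, and the dimension bound $\dim L_i(z)<n$ falls out directly because specialness gives $\dim W(h)<n$.

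You correctly identify that the crux is showing $y\in\Delta^n$, and you gesture at the right ingredients (specialness plus Condition~\ref{schanuel}), but you defer this to the last paragraph as ``the technical heart'' rather than proving it, which is precisely where the work lives. Worse, by not isolating this as a first step, your inductive manipulations are not well-posed: you substitute the torus relations from $L(h)$ into the defining polynomials of $W$ to produce a Mann-type equation $\sum_k a_k(h)y^{q_k}=1$, but without already knowing $y\in\Delta^n$ the quantities $a_k(h)y^{q_k}$ need not lie in $\Delta$, so the Mann property (which concerns non-degenerate solutions \emph{in} the group) cannot be applied at that point. There are also two structural mismatches in the induction: the base case $n=0$ is not ``direct from the Mann property'' --- the Mann property controls \emph{linear} relations, and passing from it to the statement about the variety $W$ is exactly the content of Lemma~\ref{mordelllangfact}, not a triviality; and your degenerate-subsum case passes to a proper subtorus $L'\subsetneq L$, which lowers the torus dimension but not necessarily $n$, so the induction variable does not decrease. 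In short: prove the analogue of Lemma~\ref{specialindelta} first (that is the only place where Condition~\ref{schanuel} and specialness interact), then apply the Mordell--Lang lemma wholesale instead of rederiving it inductively.
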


\noindent For the proof of Theorem \ref{specialml} the following lemma is needed.

\begin{lem}\label{specialindelta} Assume Condition \ref{schanuel}. Let $g \in \Delta^l$, $y \in \mathbb{R}^n$ and let $W$ be a variety defined over $\qt(g)$ and $L$ be an $\qt$-torus over $g$. If the pair $(W,L)$ is special and $y\in W\cap L$, then $y \in \Delta^n$.
\end{lem}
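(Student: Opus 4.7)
\noindent\emph{Proof plan.} The plan is to argue by contradiction, combining the Schanuel condition with the specialness of $(W,L)$. Suppose $y \notin \Delta^n$ and let $y'$ be a maximal subtuple of $y$ that is $\qt$-linearly independent modulo $\Delta$, of length $l' \geq 1$, so that $y = (y', y'')$. Each coordinate of $y''$ lies in the $\qt$-linear span of $\Delta \cup y'$, so after enlarging $g$ by finitely many elements of $\Delta$ one may assume $\ld{g,y'}{y''} = 0$; one may further replace $W$ by the locus of $y$ over $\qt(g)$ and $L$ by the minimal $\qt$-torus over $g$ containing $y$, so that $\dim W = \td{g}{y}$ and $\dim L = \ld{g}{y} = l'$. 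Both operations preserve specialness, and at the trivial projection the condition then forces $\dim W < n - l'$.

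Next I choose $h = (h_1,\ldots,h_p) \in \Gamma^p$ that is $\Q$-linearly independent and whose $\qt$-linear span contains $g$. A Gelfond--Schneider-type consequence of the Schanuel condition applied to $(\gamma,\gamma^\tau)$ for $\gamma \in \oqt$ shows that for tuples in $\oqt$, $\Q$-linear independence is equivalent to $\qt$-linear independence, so $\ldo{h} = \lfo{h} = p$ and $\tdo{h} = 0$. Put $\beta = (h_1^\tau,\ldots,h_p^\tau)$. Schanuel applied to $(h,\beta)$ is tight and yields $\tdo{h,\beta} = p$, $\ldo{h,\beta} = p$, $\lfo{h,\beta} = 2p$, with $g$ algebraic over $\qt(h,\beta)$. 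Applying Schanuel to the tuple $(h,\beta,y',y'')$, one computes $\tdo{h,\beta,y',y''} \leq p + \dim W$ (using that $y \in W$ is defined over $\qt(g) \subseteq \qt(h,\beta)^{\mathrm{alg}}$), $\ldo{h,\beta,y',y''} = p + l'$ (using $\ld{h}{y'} = l'$ and $\ld{h,y'}{y''} = 0$), and $\lfo{h,\beta,y',y''} = 2p + l' + \lf{h,\beta,y'}{y''}$. The Schanuel inequality combined with $\dim W < n - l'$ then forces $\lf{h,\beta,y'}{y''} < n - l'$, yielding a nontrivial relation $\prod_i y_i^{c_i} = \delta$ with $c_i \in \Q$ not all zero and $\delta$ in the $\qt$-linear span of $h$, hence in $\Delta$.

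To finish, adjoin $\delta$ to $g$ and observe the following tension: on one hand, $g^+ = g \cup \{\delta\} \subseteq \Delta$ still has $y'$ as a $\qt$-linearly independent subtuple over it, so the minimal $\qt$-torus $L^+$ over $g^+$ containing $y$ has dimension at least $l'$; on the other hand the derived equation, provided it is genuinely new over $g^+$ (i.e.\ not already in the $\qt$-linear span of the equations defining $L$), forces $\dim L^+ < l'$. The main obstacle is precisely to guarantee this genuineness, i.e.\ that $\delta$ may be chosen outside the $\qt$-linear span of $g$. I plan to arrange this by taking $h$ to properly extend a $\qt$-linear basis of $\qt$-span$(g)$ within $\Gamma$ and, if the first-derived $\delta$ happens to lie in $\qt$-span$(g)$, iterating the Schanuel extraction on a suitably refined subtuple until a genuinely new $\delta$ is produced. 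This delicate bookkeeping, intertwining the $\Q$- and $\qt$-linear dimensions via the auxiliary tuple $\beta = h^\tau$, is the heart of the argument and the step I expect to require the most care.
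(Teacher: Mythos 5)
Your proposal introduces the auxiliary tuple $\beta = h^\tau$ to make Schanuel ``tight'' for $(h,\beta)$ and pin down transcendence degrees exactly, but this creates a genuine error at exactly the step you rely on: the claim that $g$ (lying in the $\qt$-linear span of $h$) is algebraic over $\qt(h,\beta)$. Recall that the scalar field here is $\qt = \Q(\tau)$, the field of rational functions in $\tau$, so the $\qt$-linear span of $h \in \Gamma^p$ contains elements such as $h_j^{\tau^2}$ and $h_j^{1/\tau}$. Under the Schanuel condition these are \emph{not} algebraic over $\qt(h, h^\tau)$: for instance, applying the Schanuel inequality to $(h_j, h_j^\tau, h_j^{\tau^2})$ with $h_j \in \oqt$ gives $\tdo{h_j,h_j^\tau,h_j^{\tau^2}} \geq 2$ (when $1,\tau,\tau^2$ are $\Q$-independent), so $h_j^{\tau^2}$ is transcendental over $\qt(h_j, h_j^\tau)$. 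Consequently the inequality $\tdo{h,\beta,y',y''} \leq p + \dim W$ fails, and the rest of the bookkeeping in steps 7--8 collapses. This is not a patchable detail: no finite tuple of $\tau$-powers of $h$ suffices, since $\qt$ contains rational functions of arbitrary degree and negative powers of $\tau$. Separately, the final ``iteration'' you invoke to ensure $\delta$ is genuinely new over $g$ is an acknowledged hole with no termination argument.

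The paper's own proof avoids all of this by \emph{not} trying to make anything tight. After reducing to $g,y$ multiplicatively independent and extending $g$ so it has a subtuple $h \in (\Delta\cap\oqt)^s$ with $\ld{h}{g}=0$, one simply observes the trivial length bounds $\tdo{g} \leq l - s$ (since $\tdo{h}=0$ and the remaining $l-s$ coordinates each contribute at most $1$) and $\ldo{g} \leq s$; specialness at the trivial projection gives $\td{g}{y} + \ld{g}{y} < n$; adding, $\tdo{g,y} + \ldo{g,y} < l + n$, so the Schanuel condition forces $\lfo{g,y} < l+n$, contradicting multiplicative independence. One application of Schanuel to $(g,y)$, no auxiliary tuple $\beta$, no iteration. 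The lesson is that you only need $\tdo{g}+\ldo{g}\leq l$, which follows from the presence of an algebraic generating subtuple by pure counting, without ever identifying what the transcendence degree actually is.
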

\begin{proof} Let $y=(y_1,...,y_n) \in W\cap L$. If $(W,L)$ is special, we have that for every subset $I\subseteq \{1,...,n\}$
\begin{equation}\label{dimineq}
\td{g,(y_i)_{i\in I}}{(y_j)_{j \notin I}} + \ld{g,(y_i)_{i\in I}}{(y_j)_{j \notin I}} < n -  |I|.
\end{equation}
For a contradiction suppose that $y \notin \Delta^n$. We easily can reduce to the case that $g,y$ are multiplicatively independent, ie. $\lfo{g,y}=l+n$. By \eqref{dimineq}
\begin{equation*}
\td{g}{y} + \ld{g}{y} < n.
\end{equation*}
By definition of $\Delta$, we can assume there is $s \in \N$ and a subtuple $h \in (\Delta \cap \oqt)^s$ of $g$ such that
$$
\ld{h}{g} = 0.
$$
Hence
\begin{align*}
\tdo{g,y} &+ \ldo{g,y}\\
&= \tdo{g} + \ldo{g} + \td{g}{y} + \ld{g}{y}\\
&< l-s + s + n = l+n.
\end{align*}
By Condition \ref{schanuel}, $\lfo{g,y} < l +n$. This is a contradiction to our assumption on $g$ and $y$. \end{proof}

\noindent In \cite{dense} it is shown that the Mann property implies the Mordell-Lang property. In our notation \cite{dense}, Proposition 5.8, is stated as follows:

\begin{lem}\label{mordelllangfact} Let $G$ be a multiplicative subgroup of $\R_{>0}$ with the Mann property over $\qt$. Then for every variety $W\subseteq \mathbb{R}^n$, there  are finitely many basic $\Q$-tori $L_1,...,L_m$ of $\mathbb{R}_{>0}^n$ and $g_1,...,g_m\in G^n$ such that
\begin{equation*}\label{mlfacteq}
W \cap G^n = \bigcup_{i=1}^m g_i\cdot L_i \cap G^n.
\end{equation*}
Moreover, every coordinate of $g_1,...,g_n$ is a coordinate of a Mann solution of $G$ over $\qt$.
\end{lem}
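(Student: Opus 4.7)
The plan is to reduce to a single polynomial equation and then induct on the number of monomials, exploiting the Mann property to control the ``generic'' case and recursion to handle vanishing subsums.

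First, I would reduce to $W = \{f = 0\}$ for a single polynomial $f$. If $W$ is cut out by finitely many polynomials, each yields a decomposition of the form $\bigcup_i g_i L_i$, and the finite intersection of such decompositions, meeting $G^n$, is again a finite union of cosets of $\Q$-tori meeting $G^n$. This is because the intersection of two $\Q$-tori is a $\Q$-torus, and a coset intersection $gL \cap g'L'$ is either empty or a coset of $L \cap L'$.

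Now write $f = \sum_{j=1}^N c_j x^{\alpha_j}$ with nonzero coefficients in $\qt$ and pairwise distinct $\alpha_j \in \Z_{\geq 0}^n$, and induct on $N$. The case $N=1$ is vacuous since $g^{\alpha_1} > 0$ forces $W \cap G^n = \emptyset$. For $N \geq 2$, any $g \in W \cap G^n$ satisfies
\begin{equation*}
\sum_{j=2}^N (c_j/c_1)\, g^{\alpha_j - \alpha_1} = -1,
\end{equation*}
which is a Mann-type equation in the variables $y_j := g^{\alpha_j - \alpha_1} \in G$. Either $(y_2,\ldots,y_N)$ is a non-degenerate solution, in which case the Mann property leaves only finitely many possible tuples $s^{(i)}$, each constraining $g$ to lie in a coset of the $\Q$-torus $L_i := \{h \in G^n : h^{\alpha_j - \alpha_1} = 1,\ j=2,\ldots,N\}$; or a proper non-empty subsum vanishes, splitting $f(g) = 0$ into two polynomial relations with strictly fewer monomials, to which the induction hypothesis applies. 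Intersecting the resulting decompositions yields a finite union of cosets of $\Q$-tori meeting $G^n$, as required.

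The moreover clause --- that every coordinate of each $g_i$ is itself a coordinate of a Mann solution of $G$ over $\qt$ --- is the main obstacle. In the non-degenerate branch, a coset representative can be built by solving $h^{\alpha_j - \alpha_1} = s^{(i)}_j$ via Smith normal form applied to the integer matrix $(\alpha_j - \alpha_1)_j$; each coordinate of $h$ then becomes a monomial in the $s^{(i)}_j$'s, which are coordinates of the Mann solution $s^{(i)}$. In the degenerate branch the property must be propagated through the recursion, requiring careful tracking of how representatives are formed when two unions of cosets are intersected. Making this bookkeeping precise --- in particular, showing that the class of coordinates of Mann solutions is closed under the multiplicative combinations used to produce the representatives --- is what I expect to be the most delicate part of the argument.
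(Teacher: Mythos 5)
Your argument for the displayed equality is essentially the standard one, and in fact it is essentially the argument of the source the paper relies on: the paper itself does not prove this lemma but quotes it from van den Dries and G\"unayd\i n (\cite{dense}, Proposition 5.8), justifying the final sentence only by the remark that it is explicit in that proof. Reading the statement, as intended, with $W$ defined over $\qt$, your reduction to a single $f=\sum_j c_j x^{\alpha_j}$ with $c_j\in\qt$, the division by $c_1x^{\alpha_1}$, the use of the Mann property over $\qt$ in the non-degenerate case, and the induction on the number of monomials in the degenerate case are all fine: each piece $\{g\in G^n : g^{\alpha_j-\alpha_1}=s_j \text{ for all } j\}$ is contained in $W$, so whenever a piece is nonempty you may take as representative any of its points, which automatically lies in $G^n$. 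Up to the routine verification of these containments, the equality $W\cap G^n=\bigcup_i g_iL_i\cap G^n$ is correctly established.

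The genuine gap is the ``moreover'' clause, and the repair you sketch would not work. What your construction gives directly is that the constraint values $g^{\alpha_j-\alpha_1}=s_j$ cutting out each coset are coordinates of Mann solutions of $G$ over $\qt$, not that the coordinates of a representative are; bridging that difference is the entire content of the last sentence. Solving $h^{\alpha_j-\alpha_1}=s_j$ by Smith normal form in general forces rational, not integral, exponents, so the resulting $h$ is a product of rational powers of the $s_j$ and need not lie in $G^n$ at all (no divisibility of $G$ is assumed), whereas the lemma demands $g_i\in G^n$; and even when the exponents can be taken integral, you would need that products of coordinates of Mann solutions are again coordinates of Mann solutions, which you do not prove and which is not obvious (multiplying two unit equations and passing to a minimal nondegenerate subsum summing to $1$ does not let you retain the distinguished term). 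The same difficulty recurs, untreated, at every intersection step --- both in your reduction to hypersurfaces and in your degenerate branch --- where the only available representative of $gL\cap g'L'\cap G^n$ is an arbitrary point of it, over which you have no control. So as written you have proved the displayed equality but not the final sentence; the honest options are to import that sentence from the proof of \cite{dense}, Proposition 5.8, as the paper does, or to prove directly the weaker fact actually used in Theorem \ref{specialml}, namely that for $G=\Delta$ the representatives can be chosen with coordinates in the divisible closure of $\Gamma$, which is far more forgiving of the roots and products your construction introduces.
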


\noindent The fact that every coordinate of $g_1,...,g_n$ is a coordinate of a Mann solution over $\qt$ is not in the statement of \cite{dense}, Proposition 5.8, but explicit in its proof.

\begin{proof}[Proof of Theorem \ref{specialml}] Since the divisible closure of $\Gamma$ has the Mann property over $\qt$, $\Delta$ has the Mann property over $\qt$ by Proposition \ref{mannfor2qt}(ii). Hence by Lemma \ref{mordelllangfact} there are basic $\Q$-tori $L_1,...,L_m$ and $g_1,...,g_m\in \Delta^{l+n}$ such that
\begin{equation}\label{specialmleq2}
W \cap \Delta^{l+n} = \bigcup_{i=1}^m g_i \cdot L_i \cap \Delta^{l+n}.
\end{equation}
By Proposition \ref{mannfor2qt}(i), every Mann solution over $\qt$ of $\Delta$ is in the divisible closure of $\Gamma$. Hence every coordinate of $g_1,...,g_m$ is in the divisible closure of $\Gamma$ by Lemma \ref{mordelllangfact}. After changing the $L_i$'s slightly, we can even take $g_1,...,g_m\in \Gamma^{l+n}$.
Finally, the left hand side of \eqref{specialmleq} in the statement of the theorem is contained in $\Delta^{l+n}$ by Lemma \ref{specialindelta}.\\
\noindent For the second statement of the theorem, let $i\in\{1,...,m\}$ and let $(h,y)$ be in the intersection of the left hand side of \eqref{specialmleq} and $g_i \cdot L_i$. Since $(W(h),L(h))$ is special, we have $\dim W(h)<n$. Hence $\dim L_i(h) < n$ by \eqref{specialmleq2}. It follows directly that $\dim L_i(z)<n$ for every $z\in \R^l$.
\end{proof}

\end{section}

\begin{section}{The axiomatization}\label{definitions}
Let $\Gamma$ be a multiplicative subgroup of $\mathbb{R}_{>0}$ such that the divisible closure of $\Gamma$ has the Mann property over $\qt$ and $\Gamma$ is a subset of $\oqt$. Let $\Delta$ be the $\qt$-linear subspace of $\R_{>0}$ generated by $\Gamma$. Further we assume that
\begin{equation}\label{finitelymanycosets}
|\Gamma:\Gamma^{[d]}| < \infty, \textrm{ for every $d \in \mathbb{N}$},
\end{equation}
where $\Gamma^{[d]}$ is the group of $d$th powers of $\Gamma$. In the rest of this section, axiomatizations of $(\overline{\R},x^{\tau},\tau,\Gamma)$ and $(\overline{\R},x^{\tau},\tau,\Delta)$ will be given.\\

\noindent Note that \eqref{finitelymanycosets} holds for every multiplicative subgroup of $\mathbb{R}_{>0}$ which has finite rank.

\subsection{Abelian subgroups} Let $G$ be a multiplicative subgroup of $(M_{>0},\cdot)$ for some real closed field $M$. For $k=(k_1,...,k_n)\in\Z^n$ and $g=(g_1,\dots,g_n) \in G^n$, we define
\begin{equation*}
\chi_{k}(g):= g_1^{k_1}\cdot...\cdot g_n^{k_n}.
\end{equation*}
Also, for $m \in \Z$, we will write
$$
D_{k,m} := \{  g \in G^n : \chi_{k}(g) \in G^{[m]}\}.
$$
Note that $(G^{[m]})^n\subseteq D_{k,m}$. Hence whenever $G^{[m]}$ is of finite index in $G$ we have that $D_{k,m}$ is of finite index in $G^n$. This implies that both $D_{k,m}$ and $G^n\setminus D_{k,m}$ are finite unions of cosets of $(G^{[m]})^n$.  Using the fact that the collection $\big\{(G^{[m]})^n:m\in \mathbb{N}\big\}$ is a distributive lattice of subgroups of $G^n$, we get the following consequence.

\begin{lem}\label{unionofcosets}
Let $n>0$, $k_1,\dots,k_s\in\Z^n$ and $m_1,\dots,m_t \in \mathbb{N}$.
Suppose that $|G:G^{[m_j]}|$ is finite for $j=1,\dots,t$. Then every boolean combination of cosets of $D_{k_i,m_j}$ in $G^n$ with $i\in\{1,\dots,s\}$ and $j\in\{1,\dots,t\}$ is a finite union of cosets of $(G^{[l]})^n$, where $l$ is the lowest common multiple of $m_1,\dots,m_t$.
\end{lem}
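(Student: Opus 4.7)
The plan is to fix the subgroup $H := (G^{[l]})^n$ of $G^n$, where $l = \mathrm{lcm}(m_1,\dots,m_t)$, and establish two facts: (a) $H \subseteq D_{k_i,m_j}$ for every $i,j$, and (b) $H$ has finite index in $G^n$. Once these are in hand, each $D_{k_i,m_j}$ (and each of its cosets) is a finite union of cosets of $H$. Since the family of finite unions of cosets of a finite-index subgroup is closed under unions, intersections, and complements in $G^n$, boolean combinations of such sets stay in the family, giving the conclusion.

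Fact (a) is a direct calculation: if $g = (h_1^l,\dots,h_n^l) \in H$, then for any $k_i = (k_{i,1},\dots,k_{i,n})$,
\[\chi_{k_i}(g) = \prod_{s=1}^n h_s^{l k_{i,s}} = \Bigl(\prod_{s=1}^n h_s^{k_{i,s}}\Bigr)^l \in G^{[l]} \subseteq G^{[m_j]},\]
using $m_j \mid l$, so $g \in D_{k_i,m_j}$.

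Fact (b) rests on the distributive-lattice identity $G^{[a]} \cap G^{[b]} = G^{[\mathrm{lcm}(a,b)]}$ invoked just before the lemma. This is a short Bézout argument: if $x = y^a = z^b$ and $d = \gcd(a,b) = ua + vb$, then $w := y^v z^u$ satisfies
\[w^{ab/d} = (y^a)^{vb/d}(z^b)^{ua/d} = x^{(vb+ua)/d} = x,\]
so $x$ is an $\mathrm{lcm}(a,b)$-th power. Iterating over $m_1,\dots,m_t$ yields $G^{[l]} = \bigcap_{j=1}^t G^{[m_j]}$, a finite intersection of finite-index subgroups of $G$, hence itself of finite index; taking $n$-fold direct powers preserves this, so $|G^n:H| < \infty$.

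I do not anticipate any real obstacle: the only non-formal step is the Bézout identification of $G^{[a]} \cap G^{[b]}$, and the remaining work is routine bookkeeping with cosets of a finite-index subgroup. The use of the hypothesis $|G:G^{[m_j]}|<\infty$ enters precisely at fact (b), and the use of the exponents $k_i$ enters only at fact (a); the two sides never interact beyond being absorbed into cosets of the common refinement $H$.
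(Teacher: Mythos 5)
Your proof is correct and follows essentially the same route as the paper, which leaves the lemma without a displayed proof and instead relies on the preceding observations that $(G^{[m]})^n \subseteq D_{k,m}$, that this forces $D_{k,m}$ to have finite index and hence be (together with its complement) a finite union of cosets of $(G^{[m]})^n$, and that $\{(G^{[m]})^n : m\in\N\}$ is a distributive lattice with $\bigcap_j G^{[m_j]} = G^{[l]}$. Your facts (a) and (b), together with the B\'ezout verification of the lattice identity, are just a more explicit write-up of exactly that argument.
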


\noindent We say a subgroup $H$ of $G$ is \emph{pure}, if $h \in H^{[n]}$ whenever $h\in G^{[n]}$ for $n\in \N$.\\
For a pure subgroup $H$ of $G$ and a subset $A$ of $G$, we define $H_G\<A\>$ as the set of $g\in G$ such that $g^n$ is in the subgroup of $G$ generated by $H$ and $A$ for some $n > 0$; that is there are $h \in H$, $a\in A^t$, and $k \in
\Z^t$, such that $g^n = h \cdot \chi_k(a)$. Note that $H_G\<A\>$ is the smallest pure subgroup of $G$ containing $A$ and $H$.

\subsection{Languages and Mordell-Lang axioms for special pairs}
 Let $\mathfrak{L}$ be the language of $(\overline{\R},x^{\tau},\tau)$. We define the language $\mathfrak{L}_{\Gamma}$ as $\mathfrak{L}$ augmented by a constant symbol $\dot{\gamma}$ for every $\gamma \in \Gamma$. The $\mathfrak{L}$-structure $(\overline{\R},x^{\tau},\tau)$ naturally becomes a $\mathfrak{L}_{\Gamma}$-structure by interpreting every $\gamma\in \Gamma$ as $\dot{\gamma}$. Let $T_{\Gamma}$ be the theory of this $\mathfrak{L}_{\Gamma}$-structure. Finally let $\mathfrak{L}_{\Gamma}(U)$ be the language $\mathfrak{L}_{\Gamma}$ expanded by an unary predicate symbol $U$.\\

\noindent Let $W$ be a variety defined over $\qt$ and let $L$ be a basic $\qt$-torus. Note that both $W$ and $L$ are $\mathfrak{L}$-$\emptyset$-definable. Further let $\varphi$ be the $\mathfrak{L}_{\Gamma}(U)$-formula which defines the set
$$S:= \{(g,y) \in \Gamma^l \times \mathbb{R}^n : \ (g,y) \in W \textrm{ and } (W(g),L(g)) \textrm{ is special} \}.$$
By Theorem \ref{specialml}, there are basic $\Q$-tori $L_1,...,L_m$ and $\gamma_1,...,\gamma_m \in \Gamma^{l+n}$ such that $S$ is a subset of the union of $\gamma_1\cdot L_1,...,\gamma_m \cdot L_m$ and $\dim L_i(z)<n$ for every $i=1,...,m$ and $z \in \R^l$. Let $k_{i,1},...,k_{i,s_i} \in \mathbb{Z}^{l+n}$ be such that
$$
L_i = \{ x \in \mathbb{R}^n \ : \ \chi_{k_{i,j}}(x)=1, \textrm{ for } j=1,...,s_i  \}.
$$
The \emph{Mordell-Lang axiom of $(W,L)$} is defined as the $\mathfrak{L}_{\Gamma}(U)$-formula $\psi_{(W,L)}$ given by
$$
\varphi(x) \rightarrow \bigvee_{i=1}^{m} \bigwedge_{j=1}^{s_i} \chi_{k_{i,j}}(x)=\chi_{k_{i,j}}(\gamma_i).
$$

\begin{subsection}{The theory}
We consider the class of all $\mathfrak{L}_{\Gamma}(U)$-structure $(M,G)$ satisfying the
 following axioms:
\smallskip
\begin{itemize}
  \item [(A1)] $M$ is a model of $T_{\Gamma}$,
  \item [(A2)] $G$ is a dense multiplicative subgroup of $M$ with pure subgroup $\Gamma$,
  \item [(A3)] $|\Gamma
: \Gamma^{[n]}| = | G : G^{[n]}|$, for all $n \in \mathbb{Q}$,
\item[(A4)] $L\cap (G\setminus\{1\})^n = \emptyset$, for every basic $\qt$-torus $L\subseteq M^n$ which is not a basic $\Q$-torus.
\item[(A5)] Mordell-Lang axiom $\psi_{W,L}$ for every variety $W\subsetneq M^{l+n}$ over $\qt$ and every basic $\qt$-torus $L\subseteq M^{l+n}$,
\item[(A6)] the set \begin{equation*} \bigcap_{i=1}^{m}\{ a \in M \ : \ \forall
g \in G^l \ f_i(g,b)\neq a\} \end{equation*} is dense
in $M$, for all $b\in M^n$ and $\mathfrak{L}$-$\emptyset$-definable functions
$f_1,...,f_m: M^{l+n} \to M$.
\end{itemize}
\smallskip
\noindent One can easily show that there is a first order $\mathfrak{L}_{\Gamma}(U)$-theory whose models are exactly the structures satisfying (A1)-(A6). Let $T_{\Gamma}(\Gamma)$ be this $\mathfrak{L}_{\Gamma}(U)$-theory.

\begin{prop} Assume Condition \ref{schanuel}. Then $(\overline{\R},x^{\tau},\tau,\Gamma)\models T_{\Gamma}(\Gamma)$.
\end{prop}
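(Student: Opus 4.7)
The plan is to verify the axioms (A1)--(A6) one by one for the structure $(\overline{\R}, x^\tau, \tau, \Gamma)$, taking $G := \Gamma$. Axioms (A1), (A2), and (A3) are essentially formal: (A1) is just the definition of $T_\Gamma$; for (A2), density of $\Gamma$ in $\R_{>0}$ is the standing assumption on $\Gamma$ and the group $\Gamma$ is trivially a pure subgroup of itself; and (A3) is immediate once $G = \Gamma$.

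The Schanuel condition first plays a role in (A4). Suppose $g \in (\Gamma \setminus \{1\})^n$ lies on a basic $\qt$-torus $L$. Because $\Gamma \subseteq \oqt$ we have $\tdo{g} = 0$, so the Schanuel condition yields $\ldo{g} \geq \lfo{g}$, and combining with the trivial reverse inequality forces $\ldo{g} = \lfo{g}$. A short linear-algebra computation then identifies the $\qt$-kernel $\{p \in \qt^n : \chi_p(g) = 1\}$ with the $\qt$-scalar extension of its $\Q$-kernel, so every $\qt$-multiplicative relation on $g$ is a $\qt$-combination of $\Q$-relations. Consequently the $\qt$-subspace defining $L$ descends from a $\Q$-subspace of $\Q^n$, and $L$ is already a basic $\Q$-torus; this gives (A4) by contrapositive.

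Axiom (A5) is a direct first-order repackaging of Theorem \ref{specialml}: for each variety $W \subsetneq \R^{l+n}$ over $\qt$ and each basic $\qt$-torus $L \subseteq \R^{l+n}$, the theorem produces precisely the basic $\Q$-tori $L_1,\ldots,L_m$ and the tuples $\gamma_1,\ldots,\gamma_m \in \Gamma^{l+n}$ that appear in the formula $\psi_{(W,L)}$, so $\psi_{(W,L)}$ holds in $(\overline{\R}, x^\tau, \tau, \Gamma)$. Axiom (A6) is a Baire-category observation: since $\oqt$ is countable, so are $\Gamma$ and each $\Gamma^l$; for fixed $b$ and $\mathfrak{L}$-$\emptyset$-definable $f_1,\ldots,f_m$, each image $f_i(\Gamma^l, b)$ is countable, their finite union is a countable subset of $\R$, and the complement of a countable set in $\R$ is dense.

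The main obstacle is the torus-theoretic step inside (A4): passing from the Schanuel-derived identification of $\qt$- and $\Q$-kernels of $p \mapsto \chi_p(g)$ to the conclusion that the ambient torus $L$ itself (not merely the minimal $\qt$-torus through $g$) descends to a $\Q$-torus is the delicate point, and it is where the algebraicity hypothesis $\Gamma \subseteq \oqt$ is used in an essential way. Once this algebraic core is in hand, (A5) is a direct citation of Theorem \ref{specialml} and (A6) is routine.
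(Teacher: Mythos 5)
Your verification of (A1)--(A3), (A5), and (A6) coincides with the paper's proof: (A1)--(A3) hold by construction, (A5) is a direct restatement of Theorem~\ref{specialml}, and (A6) follows from countability of $\Gamma\subseteq\oqt$. For (A4) the route is also the same in substance: the paper compresses the Schanuel step to the one-line display $\tdo{g}+\ldo{g}=0+\ldo{g}<\lfo{g}$, whereas you unwind it to $\tdo{g}=0$, hence $\ldo{g}=\lfo{g}$, hence every $\qt$-multiplicative relation on $g$ is a $\qt$-linear combination of $\Q$-relations.

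The step you flag as the ``main obstacle'' is, however, a genuine gap rather than a routine delicacy to be filled in. Knowing that the space of $\qt$-relations on $g$ equals the $\qt$-span of the space of $\Q$-relations only tells you that the \emph{minimal} $\qt$-torus through $g$ is a basic $\Q$-torus; an arbitrary $\qt$-torus $L$ with $g\in L$ has relation space merely \emph{contained} in that of $g$, and a $\qt$-subspace of a rationally generated $\qt$-space need not itself be rationally generated. Concretely, take $h\in\Gamma\setminus\{1\}$ and $g=(h^6,h^3,h^2)$: then $\ldo{g}=\lfo{g}=1$, yet $g$ lies on the $\qt$-torus $L=\{x\in\R_{>0}^3:x_1\,x_2^{-2-2\tau}x_3^{3\tau}=1\}$, and $L$ is not a basic $\Q$-torus since no nonzero rational vector is proportional to $(1,-2-2\tau,3\tau)$. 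In particular the asserted strict inequality $\ldo{g}<\lfo{g}$ fails here, so the paper's own proof of (A4) contains precisely the gap you noticed; as written the implication ``$L$ not a $\Q$-torus and $g\in L\cap(\Gamma\setminus\{1\})^n$ forces $\ldo{g}<\lfo{g}$'' is not correct. What the Schanuel argument actually yields is the statement about the minimal $\qt$-torus through $g$ (equivalently, $\ldo{g}=\lfo{g}$ for all $g\in(\Gamma\setminus\{1\})^n$), which is also what the appeal to (A4) in the proof of Lemma~\ref{mainlemma} genuinely uses; (A4) as stated is stronger than this, and neither your argument nor the paper's closes that extra distance.
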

\begin{proof}
The axioms (A1)-(A3) hold by definition. Axiom (A5) is implied by Theorem \ref{specialml}. Since $\Gamma$ is a subset of $\oqt$, it is countable and hence (A6) holds for $\Gamma$. Finally consider axiom (A4). Let $L$ be a basic $\qt$-torus $L\subsetneq \R^{n}$ which is not a $\Q$-torus.
For a contradiction, suppose there is $g \in (\Gamma\setminus\{1\})^n$ such that $g\in L$. Since every element of $\Gamma$ is algebraic over $\qt$ and $L$ is not a $\Q$-torus, we get
$$ \tdo{g} + \ldo{g} = 0 + \ldo{g} < \lfo{g}.$$
This contradicts Condition \ref{schanuel}.
\end{proof}

\noindent For an axiomatization of  $(\overline{\R},x^{\tau},\tau,\Delta)$, consider the $\mathfrak{L}_{\Gamma}(U)$-structures $(M,G)$ satisfying
\smallskip
\begin{itemize}
\item [(A7)] $G$ is a dense multiplicative subgroup of $M$ with subgroup $\Delta$,
\item[(A8)] $g^p \in G$, for every $g \in G$ and $p \in \qt$.
\end{itemize}
\smallskip
Let $T_{\Gamma}(\Delta)$ be the first order $\mathfrak{L}_{\Gamma}(U)$-theory whose models are exactly the structures satisfying (A1) and (A5)-(A8).

\begin{prop} Assume Condition \ref{schanuel}. Then $(\overline{\R},x^{\tau},\tau,\Delta)\models T_{\Gamma}(\Delta)$.
\end{prop}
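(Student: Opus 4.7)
The plan is to verify each of the axioms (A1) and (A5)--(A8) in turn; all but (A5) and (A6) are essentially immediate from the setup.

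I would first dispose of the routine axioms. Axiom (A1) holds because the $\mathfrak{L}_{\Gamma}$-reduct of $(\overline{\R},x^{\tau},\tau,\Delta)$ is the $\mathfrak{L}_{\Gamma}$-structure $(\overline{\R},x^{\tau},\tau)$ with its canonical interpretation of the constants $\dot{\gamma}$, which is a model of $T_{\Gamma}$ by construction. Axiom (A7) follows from the standing assumption that $\Gamma$ (and hence its superset $\Delta$) is dense in $\R_{>0}$, the fact that $\Delta$ is a multiplicative subgroup of $\R_{>0}$, and the trivial inclusion $\Delta \supseteq \Delta$. Axiom (A8) is verbatim the statement that $\Delta$ is closed under the $\qt$-power operations, which is built into the definition of $\Delta$ as a $\qt$-linear subspace of $(\R_{>0},\cdot)$.

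For axiom (A5), the work has already been done in Theorem \ref{specialml}. Given a variety $W \subsetneq \R^{l+n}$ over $\qt$ and a basic $\qt$-torus $L \subseteq \R^{l+n}$, that theorem produces basic $\Q$-tori $L_1,\ldots,L_m$ and $\gamma_1,\ldots,\gamma_m \in \Gamma^{l+n}$ such that every $(h,y) \in \Delta^l \times \R^n$ lying in $W$ with $(W(h),L(h))$ special is contained in $\bigcup_{i=1}^{m} \gamma_i \cdot L_i$. But the Mordell-Lang axiom $\psi_{W,L}$ is defined using exactly this witness data, so the conclusion of Theorem \ref{specialml} says precisely that $\psi_{W,L}$ holds in $(\overline{\R},x^{\tau},\tau,\Delta)$.

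For axiom (A6) I would use a cardinality argument. Since $\qt$ is countable, so is its algebraic closure $\oqt$; hence $\Gamma \subseteq \oqt$ is countable, and so is its $\qt$-linear span $\Delta$ (each element of $\Delta$ is a finite product $\prod_i \gamma_i^{p_i}$ with $\gamma_i \in \Gamma$ and $p_i \in \qt$, yielding only countably many possibilities). For fixed $b \in \R^n$ and any $\mathfrak{L}$-$\emptyset$-definable functions $f_1,\ldots,f_m : \R^{l+n} \to \R$, the set $\bigcup_{i=1}^{m} f_i(\Delta^l, b)$ is therefore countable, and its complement in $\R$ is dense. The substantive content of the proposition is in this way absorbed into Theorem \ref{specialml}; once one appeals to that result, every remaining verification is routine.
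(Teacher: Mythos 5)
Your proof is correct and mirrors the paper's (implicit) argument: the paper states this proposition without proof, but your verification runs exactly parallel to the proof it gives for the companion proposition that $(\OR,x^{\tau},\tau,\Gamma)\models T_{\Gamma}(\Gamma)$ — axiom (A5) via Theorem~\ref{specialml}, axiom (A6) via countability (of $\Delta$ rather than $\Gamma$), and the remaining axioms by inspection. One small remark on precision: the density of $\Delta$ required for (A7) is more cleanly seen as a consequence of $\Delta$ being a non-trivial $\qt$-linear subspace of $(\R_{>0},\cdot)$ (so that $\qt\cdot\log a$ is dense in $\R$ for any $a\in\Delta$, $a\neq 1$) than of an assumed density of $\Gamma$, which the Section~\ref{definitions} setup does not explicitly restate; but this does not affect the substance of the argument.
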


\noindent Among other things, it will be shown in the next section that both $T_{\Gamma}(\Gamma)$ and $T_{\Gamma}(\Delta)$ are complete.

\end{subsection}

\end{section}

\begin{section}{Quantifier elimination}
In this section, the first part of Theorem A and Theorem B is proved. We continue with the notation fixed at beginning of the last section (see page \pageref{definitions}). In the following, $\tilde{T}$ is either $T_{\Gamma}(\Gamma)$ or $T_{\Gamma}(\Delta)$.\\

\noindent Let $x=(x_1,\dots,x_m)$ be a tuple of distinct variables. For every $\mathfrak{L}_{\Gamma}(U)$-formula $\varphi(x)$ of the form
\begin{equation}\label{formform}
\exists y_1 \cdots \exists y_{n}
\bigwedge_{j=1}^{n} U(y_i) \wedge \psi(x,y_1,\dots,y_{n}),
\end{equation}
where $\psi(x,y_1,\dots,y_{n})$ is an $\mathfrak{L}_{\Gamma}$-formula, let $U_{\varphi}$ be a new relation symbol of arity $m$. Let $\mathfrak{L}_{\Gamma}(U)^+$ be the
language $\mathfrak{L}_{\Gamma}(U)$ with relation symbols $U_{\varphi}$ for every $\varphi$ of the form \eqref{formform}.
Let $\tilde{T}^+$ be the $\mathfrak{L}_{\Gamma}(U)^+$-theory
extending the theory $\tilde{T}$ by axioms
\begin{equation*}
\forall x\big(U_{\varphi}(x) \leftrightarrow \varphi(x)\big),
\end{equation*}
for each $\varphi$ of the form (\ref{formform}). In order to show the first part of Theorem A and Theorem B, one has to show the following:
\begin{thm}\label{nearMC2}
The theory $\tilde{T}^+$ has quantifier elimination.
\end{thm}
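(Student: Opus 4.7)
The plan is to verify the standard substructure-embedding criterion for quantifier elimination: given $\mathcal{M}_1,\mathcal{M}_2\models\tilde T^+$ with $\mathcal{M}_2$ sufficiently saturated, a common $\mathfrak{L}_{\Gamma}(U)^+$-substructure $A$ (via an embedding $f\colon A\to \mathcal{M}_2$), and a single new element $a\in M_1\setminus A$, I would show that $f$ extends to an $\mathfrak{L}_{\Gamma}(U)^+$-embedding on the substructure generated by $A\cup\{a\}$; iterating gives QE. Because $A$ is closed under the $\mathfrak{L}_{\Gamma}$-operations and contains every constant $\dot{\gamma}$, it carries the distinguished subset $U(A)=A\cap G(M_1)$. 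A preliminary normalization uses o-minimality of $T$ to pass, if needed, to $\cl{T}(A\cup\Gamma)$, checking that the $U_{\varphi}$-values are preserved because the new points produced by the o-minimal exchange property are $\mathfrak{L}_{\Gamma}$-terms over $A$ and do not land inside $G$ unless already forced (this is where axioms (A2) and (A6) enter).

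The extension step then splits in the canonical way. \textbf{Case 1: $a\in G(M_1)$.} The constraints on a candidate $b\in G(\mathcal{M}_2)$ come from two sources. The $\mathfrak{L}_{\Gamma}$-type of $a$ over $f(A)$ forces $b$ into a variety $W$ defined over $\qt(f(A))$, while the multiplicative relations $\chi_{k}(a,\vec c)=1$ witnessed by tuples from $A\cap G(M_1)$ and $\Gamma$ force $b$ into a $\qt$-torus $L$ over the same parameters. Because $a\in W\cap L$, the pair $(W,L)$ is special at $a$; axiom (A5) then covers all such intersections by finitely many cosets $g_i\cdot L_i$ of basic $\Q$-tori with $g_i\in \Gamma^{n}$, so the type reduces to specifying which coset $b$ lies in. Axiom (A4) eliminates any spurious $\qt$-torus relations outside $\Q$-tori, and axiom (A3) matches the coset structure of $G$ modulo $n$th powers on both sides. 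Saturation of $\mathcal{M}_2$ then realises the required atomic $\mathfrak{L}_{\Gamma}(U)^+$-type of $a$ over $A$.

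\textbf{Case 2: $a\notin G(M_1)$.} The $\mathfrak{L}_{\Gamma}$-type of $a$ over $A$ determines a cut which is realized in $\mathcal{M}_2$ by o-minimality and saturation. To preserve the $U_{\varphi}$-structure one wants $b$ to avoid every set of the form $\{f_i(g,c)\colon g\in G^{l}\}$ with parameters from $f(A)$. Axiom (A6), applied in $\mathcal{M}_2$ together with saturation, supplies such a generic $b$, so that no new $U_{\varphi}$-instances become true and the $\mathfrak{L}_{\Gamma}(U)^+$-atomic type is preserved. For the $T_{\Gamma}(\Delta)$-version one additionally invokes axiom (A8) to see that the substructure generated by $A\cup\{a\}$ is closed under $\qt$-powers of its $G$-elements; the case analysis and the conclusion are otherwise identical.

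\textbf{The main obstacle} will be Case 1: showing that axioms (A4)--(A5), together with the data already encoded in $A$, determine the full atomic $\mathfrak{L}_{\Gamma}(U)^+$-type of $a$ over $A$. In practice this amounts to proving that every non-trivial $U_{\varphi}(a,\vec c)$ is equivalent to one of finitely many Mordell--Lang conclusions provided by (A5). The bookkeeping between $\qt$-tori arising from multiplicative relations, $\Q$-tori arising from Mordell--Lang, and the $\Gamma$-translates that identify the correct coset will be the technically demanding part; once this is carried out, saturation of $\mathcal{M}_2$ closes the argument.
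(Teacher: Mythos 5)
Your high-level plan (substructure-embedding criterion, case split on whether $a$ lands in the distinguished subgroup, saturation and (A6) for the generic case) matches the paper's shape, but there are two genuine gaps in the case analysis and in the allocation of the key tools.

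\textbf{Missing middle case.} Your Case 2 (``$a\notin G(M_1)$'') only treats the truly generic situation $a\notin\cl{T}(A,G)$, where (A6) supplies a point avoiding all $\mathfrak{L}_{\Gamma}$-definable images of $G$-tuples. But if $a\notin G$ while $a\in\cl{T}(A,G)\setminus\cl{T}(A)$ (say $a=f(g_1,\dots,g_n,c)$ for some $g_i\in G$, $c\in A$), the cut of $a$ cannot be realized by a generic point: the $\mathfrak{L}_\Gamma$-type of $a$ pins it down as a definable function of $G$-elements, so you must first extend the embedding to the witnessing $g_1,\dots,g_n\in G$ and then close under $\cl{T}$. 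The paper handles this as a separate Case 2 (reduce to the subgroup case applied $n$ times), distinct from the generic Case 3. Without it the extension step is incomplete.

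\textbf{Misallocation of (A4)--(A5) and the Jones--Wilkie input.} You invoke the special-pair/Mordell--Lang machinery to describe the type of a candidate $b\in G(\mathcal{M}_2)$ in Case 1, but that is not where the paper uses it. The variety-plus-torus description of an algebraic dependence (Proposition \ref{inst51}, from Jones--Wilkie) together with (A4), (A5) is used to prove the structural fact $\cl{T}(H)\cap G=H$ (Lemma \ref{mainlemma} and Corollary \ref{mainlemmacor}) for any pure subgroup $H\supseteq\Gamma$ and any set $A$ independent over $G$. This is what guarantees that $(\cl{T}(A,H),H)$ is a legitimate $\mathfrak{L}_{\Gamma}(U)$-substructure, and it is a nontrivial lemma in its own right, not a routine normalization. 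Your ``preliminary normalization'' paragraph gestures at it but does not isolate the statement or sketch its proof. The actual back-and-forth step for $a\in G$ is then comparatively soft: realizing a type that specifies the order cut together with a coset pattern of $G'^{[s]}$, using Lemma \ref{unionofcosets} and the density of $G'^{[s]}$ in $G'$; the special-pair axioms play no direct role there. You should relocate the Mordell--Lang apparatus to a preliminary lemma that controls $\cl{T}(\cdot)\cap G$, and then let Case 1 rest on the coset/density argument.

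Once those two points are repaired, the remainder of your sketch (the final ``main obstacle'' paragraph, bookkeeping between $\qt$-tori and $\Q$-tori and $\Gamma$-translates) aligns well with the paper's proof.
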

\noindent The rest of this section will provide a proof of Theorem \ref{nearMC2}. In fact, we will give the proof only for $\tilde{T}=T_{\Gamma}(\Gamma)$. The case of $T_{\Gamma}(\Delta)$ can be handled in almost exactly the same way. We will comment on the differences at the end of this section.

\begin{subsection}{Main Lemma} This subsection establishes the main technical lemma used in the proof of Theorem \ref{nearMC2}. Therefor the following instance of Jones and Wilkie \cite{Thm51}, Theorem 4.2, is needed.

\begin{prop}\label{inst51}Let $M \models T$ and $b\in M, A \subseteq M$. If  $b\in \cl{T}(A)$,
then there are $y \in M^n$, a variety $W$ defined over $\qt(A)$ and an $\qt$-torus $L$ over $A$ such that $(b,y) \in W \cap L$ and
\begin{equation*}
\dim W + \dim L \leq n+1.
\end{equation*}
Further $y$ can be assumed to be multiplicatively independent over $b,A$, ie. for every $a \in A^m$ $$\lf{b,a}{y}=n.$$
\end{prop}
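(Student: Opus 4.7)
The plan is to deduce Proposition \ref{inst51} as a specialization of \cite{Thm51}, Theorem 4.2, together with a short coordinate-removal argument to enforce the multiplicative-independence clause.

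First I would apply Jones and Wilkie's theorem directly. Since $T$ is the o-minimal theory of $\overline{\R}$ expanded by the single power function $x^\tau$, its $\cl{T}$-pregeometry is governed by a Pfaffian system with field of exponents $\qt$. Applied to $b \in \cl{T}(A)$, their Theorem 4.2 yields a tuple $y \in M^n$, a variety $W$ defined over $\qt(A)$ and a $\qt$-torus $L$ over $A$ with $(b,y) \in W \cap L$ and $\dim W + \dim L \leq n + 1$, which gives the first half of the statement.

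For the multiplicative-independence clause I would argue by induction on $n$. If $y$ is already multiplicatively independent over $b, A$, we are done. Otherwise there exist a finite $a \in A^m$, a coordinate index $j$, a nonzero integer $N$ and integer exponents $r, t_i, s_l$ such that
\begin{equation*}
y_j^N = b^r \cdot \prod_{i\ne j} y_i^{t_i} \cdot \prod_{l=1}^m a_l^{s_l}.
\end{equation*}
Viewed in the ambient space of $(b,y)$, this is a monomial equation with coefficient in $\qt(A)$, so it is either already a consequence of the defining relations of $L$ or it may be adjoined to them to produce a new basic $\qt$-torus $L''$ over $A$ whose dimension is one less than that of $L$. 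In the former case, project $W$ and $L$ to delete the coordinate $y_j$ directly; in the latter, first adjoin the relation and then project. Either way, the resulting variety $W'$ over $\qt(A)$, the resulting $\qt$-torus $L'$ over $A$, and the tuple $y' := (y_1, \ldots, \hat y_j, \ldots, y_n) \in M^{n-1}$ satisfy $(b, y') \in W' \cap L'$ and, as shown below, $\dim W' + \dim L' \leq (n - 1) + 1$; the induction then closes on $|y'| = n - 1$.

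The main obstacle is precisely the dimension bookkeeping in the two cases. In case one, projecting away $y_j$ drops the ambient dimension and $\dim L$ by exactly one while $\dim W$ drops by at most one, so the bound survives. In case two, the adjoined relation is $\Q$-multiplicatively independent of the existing relations of $L$ (otherwise we would already be in case one), hence $\dim L''$ is exactly $\dim L - 1$; projecting then drops the ambient dimension by one and $\dim W$ by at most one, so the bound again survives. In both cases $L'$ stays a $\qt$-torus over $A$ because the variables $b$ and $y_i$ sit in the ambient space while the $a_l$ sit in the coefficients, matching the convention for a $\qt$-torus over $A$.
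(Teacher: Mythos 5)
The paper does not prove this proposition; it is presented as an instance of Jones and Wilkie's Theorem 4.2, multiplicative-independence clause included, with no further argument supplied. Your first paragraph (invoking that theorem for the existence of $y,W,L$ and the inequality) therefore matches the paper's intent. Your attempt to derive the multiplicative-independence clause by a coordinate-removal induction, however, has a genuine error in the dimension bookkeeping.

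The problem is Case 1. You assert that when the relation $R:\; y_j^N = b^r\prod_{i\ne j}y_i^{t_i}\prod_l a_l^{s_l}$ already holds on $L$, projecting away $y_j$ drops $\dim L$ by exactly one. In fact $\dim L$ is unchanged. Since $N\neq 0$ and $R$ holds on all of $L$, the coordinate $y_j$ is determined on $L$ by $b$, the remaining $y_i$ and $a$ (as the unique positive $N$-th root), so the deletion projection $\pi$ is injective on $L$ and $\dim\pi(L)=\dim L$. In the corank picture of the paper's definition: the exponent vector of $R$ has a nonzero $y_j$-entry and lies in the $\qt$-rowspan of the defining matrix $P$ of $L$ (this is exactly what it means for $R$ to hold identically on $L$), so $P$ already has a nonzero $y_j$-column; passing to the projected torus removes one ambient column and one unit of rank, leaving the corank $\dim L = (1+n)-\operatorname{rank}(P)$ unchanged. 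For the same reason $\dim W$ need not drop either: if $R$ also vanishes on the irreducible component of $W$ through $(b,y)$, the projection is generically finite on that component, hence dimension-preserving; and if $R$ does not vanish there, you have not yet intersected $W$ with $V(R)$, so again no drop. Thus in Case 1 you only retain $\dim W'+\dim L'\le n+1$, not the required $\le (n-1)+1=n$, and the induction does not close. A secondary issue: your dichotomy (whether $R$ is ``$\Q$-multiplicatively independent'' of the defining relations of $L$) needs to be stated in terms of $\qt$-linear dependence, since $\dim L$ is computed from a $\qt$-rank; $\Q$-independence of $R$ from the rows of $P$ does not preclude $R$ from lying in the $\qt$-rowspan of $P$, so the two cases as you describe them neither exhaust the possibilities nor align with the rank computation.
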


\begin{lem}\label{mainlemma} Let $(M,G)\models \tilde{T}$ and $H$ be a pure subgroup of $G$ containing all interpretations of the constants $\dot{\gamma}$, where $\gamma \in \Gamma$.
Then
\begin{equation*}
\cl{T}(H)\cap G = H.
\end{equation*}
\end{lem}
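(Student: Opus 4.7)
Since $H\subseteq G\cap\cl{T}(H)$ is immediate, I prove the reverse inclusion by contradiction. Suppose $g\in(\cl{T}(H)\cap G)\setminus H$; since $H$ is pure in $G$ and $M_{>0}$ is uniquely divisible, no positive power of $g$ lies in $H$. Apply Proposition~\ref{inst51} with $b=g$ and $A=H$ to obtain $y\in M^n$, a finite parameter subtuple $h\in H^s$, a variety $W$ over $\qt(h)$, and an $\qt$-torus $L$ over $h$ with $(g,y)\in W\cap L$, $\dim W+\dim L\leq n+1$, and $\lf{g,h}{y}=n$. Absorbing parameters, I write $W=W'(h)$ and $L=L'(h)$, where $W'\subseteq M^{s+1+n}$ is a variety defined over $\qt$ and $L'\subseteq M^{s+1+n}$ is a basic $\qt$-torus, so that $(h,g,y)\in W'\cap L'$.

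The core of the argument is to apply the Mordell--Lang axiom (A5) to $(W',L')$ at the point $(h,g,y)\in G^s\times M^{1+n}$. For this the specialized pair $(W'(h),L'(h))=(W,L)$ must be special in the sense of Definition~\ref{defspecial}; I plan to secure this by taking $W'$ to be the $\qt$-locus of $(h,g,y)$, $L'$ to be the minimal basic $\qt$-torus containing $(h,g,y)$, and by choosing $n$ minimal. Granting specialness, (A5) supplies a basic $\Q$-torus $L_i\subseteq M^{s+1+n}$ and a tuple $\gamma_i\in\Gamma^{s+1+n}$ with $(h,g,y)\in\gamma_i\cdot L_i$ and $\dim L_i(z)<1+n$ for every $z\in M^s$. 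The fiber bound forces at least one defining equation of $L_i$ to have a nonzero exponent vector on the last $1+n$ coordinates; rewriting that equation and using $\chi_{k_{i,j}}(\gamma_i)\in H$ together with the fact that the $h$-factors also lie in $H$ yields integers $k_0\in\Z$ and $k=(k_1,\dots,k_n)\in\Z^n$ with $(k_0,k)\neq 0$ and $g^{k_0}y_1^{k_1}\cdots y_n^{k_n}\in H$.

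This is enough to conclude: if some $k_i$ with $i\geq 1$ is nonzero, then the relation is a nontrivial $\Q$-multiplicative dependence of $y$ over the group generated by $\{g\}\cup H$, contradicting $\lf{g,h}{y}=n$; if instead $k=(0,\dots,0)$, then $k_0\neq 0$ and $g^{k_0}\in H$, whence purity of $H$ in $G$ forces $g\in H$, again a contradiction. The main obstacle I anticipate is securing specialness of $(W,L)$ from the weak bound $\dim W+\dim L\leq n+1$ of Proposition~\ref{inst51}, which does not on its own imply the strict fiber-wise inequalities demanded by Definition~\ref{defspecial}. My plan is that the minimal choices of $n$, $W$, and $L$ rule out any failure: a violation at some coordinate projection would either produce a smaller-arity witness contradicting the minimal $n$, or, if $L'$ is not already a basic $\Q$-torus, contradict axiom (A4) applied to $L'$ at the tuple $(h,g)\in(G\setminus\{1\})^{s+1}$.
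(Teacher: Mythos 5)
The proposal correctly reaches the algebraic endgame (extracting $g^{k_0}\chi_k(y)\in H$ and finishing via multiplicative independence or purity), but the route to get there has a genuine gap in the step you yourself flag: the specialness of $(W,L)=(W'(h),L'(h))\subseteq M^{1+n}$. You apply (A5) with $h$ as the group part and $(g,y)$ as the remaining variables, which requires $(W,L)$ to be special; in particular the trivial projection demands $\dim W + \dim L < n+1$, whereas Proposition \ref{inst51} only gives $\dim W + \dim L \leq n+1$. Minimality of $n$ does not close this: a failure at $l=0$ for $(W,L)\subseteq M^{1+n}$ does not produce a witness of strictly smaller arity (the new arity would again be $n$, not less). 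Your (A4) backup is also incomplete: if $L'$ is already a basic $\Q$-torus, (A4) gives nothing, and yet $\dim W + \dim L = n+1$ can occur in exactly that situation. Concretely, when $n=0$ and $\dim W(h)=1$ (so $\dim L(h)=0$), the pair $(W,L)$ is definitely not special, and your argument stalls.

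The paper takes a different route that avoids this. It applies (A5) with $(h,g)$ (all of whose coordinates lie in $G$) as the group part, which only requires $(W(h,g),L(h,g))\subseteq M^{n}$ to be special. That weaker condition is established from the minimality of $n$: a specialness failure there produces a witness of arity $l<n$, contradicting minimality. This argument shows $n=0$, and then the $n=0$ case is split on $\dim W(h)$. When $\dim W(h)=0$, the pair $(W(h,g),L(h,g))$ is vacuously special and (A5) applies. When $\dim W(h)=1$, (A5) is \emph{not} invoked; instead the paper argues directly that $\dim L(h)=0$ and, since $(h,g)\in L\cap (G\setminus\{1\})^{m+1}$, axiom (A4) forces $L$ to be a $\Q$-torus, whence $\lf{h}{g}=0$ and purity gives $g\in H$. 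If you want to salvage your approach, you would need to separate out and handle this $\dim W(h)=1$ case (and more generally the trivial-projection failure) by hand, essentially reproducing the paper's case split; as written, the "secure specialness by minimality or (A4)" plan does not go through.
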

\begin{proof} The inclusion $H \subset \cl{T}(H) \cap G$ is trivial.
It is just left to show that whenever $g \in \cl{T}(H) \cap G$, then
$g$ is also in $H$. So let $g \in \cl{T}(H) \cap G$. By Proposition \ref{inst51}, there is $n \in \mathbb{N}$ such that there are $h \in (H\setminus\{1\})^m$, $y \in M^n$, a variety $W\subseteq M^{m+1+n}$ defined over $\qt$ and a basic $\qt$-torus $L\subseteq M^{m+1+n}$ such that
\begin{equation*}
(g,y) \in W(h)\cap L(h),
\end{equation*}
\begin{equation}\label{mainlemeq1}
\dim W(h) + \dim L(h) \leq  n+1
\end{equation}
and
\begin{equation}\label{mainlemeq2}
\lf{h',g}{y} = n, \textrm{ for every $h' \in H^l$}.
\end{equation}
Take $n$ minimal with this property.\\

\noindent We will now show that $n=0$. For a contradiction, suppose that $n>0$. We first prove that the pair $(W(h,g),L(h,g))$ is special. Towards a contradiction, suppose there are $z \in W(h,g)\cap L(h,g)$, $l<n$ and a projection $\pi: M^n \to M^l$ with
\begin{equation*}
\dim W(h,g)(\pi(z)) + \dim L(h,g)(\pi(z)) \geq n-l.
\end{equation*}
Let $W'\subseteq M^{l+1}$ be the variety defined by all polynomial equations over $\qt(h)$ which are satisfied by $(g,\pi(z))$ and let $L'\subseteq M^{l+1}$ be the smallest $\qt$-torus over $h$ which contains $(g,\pi(z))$. Then
\begin{align*}
\dim W' &+ \dim L'\\
&\leq \dim W(h) + \dim L(h)  - \dim W(h,g)(\pi(z)) -\dim L(h,g)(\pi(z))\\
&\leq n +1 -(n-l)=l+1.
\end{align*}
But this contradicts the minimality of $n$. Hence $(W(h,g),L(h,g))$ is special. By (A5), there are $\gamma \in \Gamma$ and a basic $\Q$-torus $L_0$ such that $(h,g,y)\in \gamma \cdot L_0$ and $\dim L_0(h,g)<n$. Hence $\lf{\gamma,h,g}{y}<n$. This is a contradiction against \eqref{mainlemeq2}. Hence $n=0$. \\

\noindent Since $n=0$, there is a variety $W\subseteq M$ defined over $\qt$ and a basic $\qt$-torus $L\subseteq M$ such that $g \in W(h)\cap L(h)$ and
\begin{equation}\label{mainlemeq3}
\dim W(h) + \dim L(h) \leq 1.
\end{equation}
First consider the case that $\dim W(h) =1$. By \eqref{mainlemeq3}, $\dim L(h) = 0$. By (A4) and $(h,g)\in L$, $L$ is a basic $\Q$-torus. Hence $\lf{h}{g}=0.$ Since $H$ is pure and $g\in G$, we have $g \in H$.\\
Now consider $\dim W(h)=0$. By Definition \ref{defspecial} of specialness and $(h,g)\in G^{m+1}$, the pair $(W(h,g),L(h,g))$ is special. By (A5), there are a basic $\Q$-torus $L_0$ and a $\gamma \in \Gamma$ such that $(h,g) \in \gamma \cdot L_0$. As above, we get $g \in H$.
\end{proof}

\begin{cor}\label{mainlemmacor} Let $(M,G)\models \tilde{T}$ and $H$ be a pure subgroup of $G$ containing all interpretations of the constants $\dot{\gamma}$, where $\gamma \in \Gamma$.
If $A$ is $\cl{T}$-independent over $G$, then
\begin{equation*}
\cl{T}(A,H)\cap G = H.
\end{equation*}
\end{cor}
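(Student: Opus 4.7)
The strategy is to reduce the statement to Lemma \ref{mainlemma} by a standard pregeometry argument exploiting the $\cl{T}$-independence of $A$ over $G$. Concretely, I claim that under the hypotheses, $\cl{T}(A,H)\cap G \subseteq \cl{T}(H)$; once this is shown, Lemma \ref{mainlemma} gives $\cl{T}(H)\cap G = H$, and the desired equality follows (the reverse inclusion $H\subseteq \cl{T}(A,H)\cap G$ is trivial).

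To prove the claim, fix $g \in \cl{T}(A,H)\cap G$. By property (ii) of the pregeometry $\cl{T}$, there is a finite $A_0\subseteq A$ with $g \in \cl{T}(A_0\cup H)$; choose such an $A_0$ of minimal cardinality. If $A_0=\emptyset$, we are done. Otherwise pick any $a\in A_0$ and set $B:=(A_0\setminus\{a\})\cup H$. By minimality, $g\notin \cl{T}(B)$, while $g \in \cl{T}(B\cup\{a\})$. The Steinitz exchange principle (iv) then yields
\begin{equation*}
a \in \cl{T}\bigl(B\cup\{g\}\bigr) = \cl{T}\bigl((A_0\setminus\{a\})\cup H\cup\{g\}\bigr).
\end{equation*}
Since $H\subseteq G$ and $g\in G$, while $A_0\setminus\{a\}\subseteq A\setminus\{a\}$, monotonicity of $\cl{T}$ implies
\begin{equation*}
a \in \cl{T}\bigl(G\cup (A\setminus\{a\})\bigr),
\end{equation*}
contradicting the $\cl{T}$-independence of $A$ over $G$. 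Hence $A_0=\emptyset$ and $g\in \cl{T}(H)$, proving the claim.

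Applying Lemma \ref{mainlemma} to $g\in \cl{T}(H)\cap G$ gives $g\in H$, which completes the proof. There is no genuine obstacle here: the entire content of the corollary beyond Lemma \ref{mainlemma} is the observation that when $A$ is $\cl{T}$-independent over $G$, no element of $G$ can genuinely depend on $A$ in the presence of $H\subseteq G$, and this is exactly what the Steinitz exchange principle encodes.
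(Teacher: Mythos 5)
Your proof is correct and follows essentially the same route as the paper: reduce to Lemma \ref{mainlemma} by showing $\cl{T}(A,H)\cap G\subseteq\cl{T}(H)$, taking a minimal finite $A_0\subseteq A$ witnessing $g\in\cl{T}(A_0,H)$ and using the Steinitz exchange principle together with $g\in G$, $H\subseteq G$ to contradict the $\cl{T}$-independence of $A$ over $G$. The only cosmetic difference is that you invoke finite character (property (ii)) explicitly before minimizing, which tidies up a small point the paper leaves implicit.
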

\begin{proof} $H$ is
obviously a subset of $\cl{T}(A,H)\cap G$. By Lemma
\ref{mainlemma} it is only left to show that
\begin{equation}\label{eqmaincor}
\cl{T}(A,H)\cap G \subseteq \cl{T}(H) \cap G.
\end{equation}
So let $g \in \cl{T}(A,H) \cap G$ and $A'$ be a minimal subset of $A$
such that $g \in \cl{T}(A',H)\cap G$. For a contradiction, suppose
that $A'$ is non-empty and let $a \in A'$. By minimality of $A'$, we
have $g \notin \cl{T}(A'\setminus\{a\},H)$. But then the Steinitz Exchange
Principle implies that $a \in \cl{T}(A'\setminus\{a\},g,H)$. Since $g \in H
\subseteq G$, we get that
\begin{equation*}
a \in \cl{T}(A'\setminus\{a\},G).
\end{equation*}
But this is a contradiction to the $\cl{T}$-independence of $A$ over
$G$. Hence $A'$ is empty and $g \in \cl{T}(H) \cap G$. Thus
\eqref{eqmaincor} holds. \end{proof}

\end{subsection}

\begin{subsection}{Back and forth}

Let $(M,G),(M',G')$ be two
$(|\Gamma|)^+$-saturated models of $\tilde{T}$. Then $M,M'$ are
models of $T_{\Gamma}$. Let $\mathcal{E}$ be the set of all
$\mathfrak{L}_{\Gamma}$-elementary maps from $M$ to $M'$. Let
$\mathcal{S}$ be the set of all $\beta \in \mathcal{E}$ such that
there exist
\begin{itemize}
  \item a finite subset $A$ of $M$, and a finite subset $A'$ of
  $M'$,
  \item a pure subgroup $H$ of $G$ of cardinality at most
  $|\Gamma|$ and a pure subgroup $H'$ of $G'$ of cardinality at most
  $|\Gamma|$
\end{itemize}
such that
\begin{enumerate}
  \item $\beta$ is an $\mathfrak{L}_{\Gamma}(U)$-isomorphism between $(\cl{T}(A,H),H)$ and
  $(\cl{T}(A',H'),H')$,
  \item $A$ is $\cl{T}$-independent over $G$, and $A'$
  is $\cl{T}$-independent over $G'$ with $\beta(A)=A'$,
  \item $\Gamma$ is a pure subgroup of $H$ and $H'$.
\end{enumerate}
\medskip \noindent
By Corollary \ref{mainlemmacor}, $(\cl{T}(A,H),H)$ and $(\cl{T}(A',H'),H')$ are $\mathfrak{L}_{\Gamma}(U)$-substructures of $(M,G)$ and $(M',G')$ respectively. Hence every element of $\mathcal{S}$ is a partial isomorphism between $(M,G)$ and $(M',G')$.

\begin{lem}\label{bnf} The set $\mathcal{S}$ is a back-and-forth system.
\end{lem}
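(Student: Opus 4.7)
The plan is to verify the standard back-and-forth criterion: given $\beta \in \mathcal{S}$ with data $(A, A', H, H')$ and any prescribed $a \in M$, produce $\beta' \in \mathcal{S}$ extending $\beta$ whose domain contains $a$; the reverse direction is symmetric, using the $(|\Gamma|)^+$-saturation of $(M', G')$. If $a \in \cl{T}(A, H)$ there is nothing to do, so I split according to whether $a$ lies in $G$.

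Suppose first $a \in G$; then $a \notin H$ by Corollary \ref{mainlemmacor}. The $\mathfrak{L}_{\Gamma}(U)$-type of $a$ over $(\cl{T}(A, H), H)$ has two ingredients: (i) the cut of $a$ in $\cl{T}(A, H)$, determined by o-minimality of $T_{\Gamma}$; and (ii) for each $n \in \N$, the coset of $a$ modulo $n$-th powers, which by axiom (A3) and purity of $\Gamma$ in $G$ is pinned down by the unique $\gamma \in \Gamma$ with $a \in \gamma G^{[n]}$. I would realize the $\beta$-transport of this type in $G'$ by some $a' \in G' \setminus H'$ through finite satisfiability: each finite fragment amounts to membership in a boolean combination of cosets of sets $D_{k_i, n_j}$ intersected with an open interval of $M'$, and by Lemma \ref{unionofcosets} combined with (A3) this boolean combination is a non-empty union of cosets of some $(G')^{[l]}$, each dense in $M'$ by (A2); saturation then provides $a'$, and Corollary \ref{mainlemmacor} forces $a' \notin \cl{T}(A', H')$. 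Finally, one extends $\beta$ to a group isomorphism between the pure closures of $H \cup \{a\}$ in $G$ and of $H' \cup \{a'\}$ in $G'$, and further to their $\cl{T}$-closures by iterating the o-minimal extension property recalled in the preliminaries.

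Suppose instead $a \notin G$. If $a \in \cl{T}(A \cup G) \setminus \cl{T}(A, H)$, then $a$ depends on finitely many elements of $G \setminus H$, which I absorb into $H$ by finitely many applications of the previous case, reducing either to $a \in \cl{T}(A, H)$ (done) or to $A \cup \{a\}$ being $\cl{T}$-independent over $G$. In the latter subcase, axiom (A6) applied to the relevant family of $\mathfrak{L}$-$A'$-definable functions, together with saturation of $(M', G')$, yields $a' \in M' \setminus \cl{T}(A' \cup G')$ realizing the $\beta$-transport of the cut of $a$; the Steinitz exchange principle keeps $A' \cup \{a'\}$ $\cl{T}$-independent over $G'$, and the o-minimal extension property gives the desired $\mathfrak{L}_{\Gamma}$-isomorphism $\cl{T}(A \cup \{a\}, H) \to \cl{T}(A' \cup \{a'\}, H')$, which is automatically an $\mathfrak{L}_{\Gamma}(U)$-isomorphism by Corollary \ref{mainlemmacor}.

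The main obstacle is the realization step in the first case, where axioms (A3), (A4), and (A5) all contribute: (A4) forces any non-trivial $\qt$-torus relation satisfied by $(h, a)$ with $h \in H^m$ to be a $\Q$-torus relation, so the multiplicative type transports correctly under $\beta$; (A3) aligns the $n$-th power coset data of $G$ and $G'$ via $\Gamma$ and keeps the coset combinations non-empty; and (A5) feeds into Corollary \ref{mainlemmacor}, without which the pure closures would fail to generate $\mathfrak{L}_{\Gamma}(U)$-substructures and the extension produced at each step might leave $\mathcal{S}$.
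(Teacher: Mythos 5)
Your proposal follows essentially the same route as the paper: the same three-way split (first $a\in G$, realizing an $\mathfrak{L}_{\Gamma}$-type together with coset conditions of the form $x^k h \in G^{[n]}$ via Lemma \ref{unionofcosets}, (A3), density and saturation; then reduction of $a\in\cl{T}(A\cup G)$ to the first case by adjoining finitely many group elements; finally $a\notin\cl{T}(A\cup G)$ handled by (A6) and saturation), and the same use of Corollary \ref{mainlemmacor} to see that the extended map lands back in $\mathcal{S}$. One small slip: in the case $a\notin G$ but $a\in\cl{T}(A\cup G)$, you cannot ``reduce to $A\cup\{a\}$ being $\cl{T}$-independent over $G$'' --- after absorbing the witnesses $g_1,\dots,g_n\in G$ into $H$ by Case 1, you land in the already-done situation $a\in\cl{T}(A,H)$, which is exactly what the paper's Case 2 records; also, (A4) plays no direct role here (it is used earlier, in Lemma \ref{mainlemma}), so the appeal to it in your closing paragraph is superfluous.
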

\begin{proof} In order to prove this statement, we will show that for
every $\beta \in \mathcal{S}$ and every $a\in M$, there is
a $\tilde{\beta} \in \mathcal{S}$ such that $\tilde{\beta}$ extends $\beta$ and
$a\in dom(\gamma)$. In fact, this is enough because of
the symmetry of the setting.\\
Let $\beta \in \mathcal{S}$ and $a \in M$. We can
assume that $a\notin dom(\beta)$. Further let $A,A',H,H'$ witness that $\beta \in \mathcal{S}$.\\
\smallskip\noindent
\underline{Case 1:} $a\in G$.\\
Let $p(x)$ be the $\mathfrak{L}_{\Gamma}(U)$-type consisting of the
$\mathfrak{L}_{\Gamma}$-type of $a$ over $\cl{T}(A,H)$ and for every $h \in H$, $k\in\Z$ and $n>0$ one of the formulas
\begin{align}\label{pureness}
x^k \cdot h &\in G^{[n]},\\
x^k \cdot h &\notin G^{[n]},
\label{pureness2}
\end{align}
depending on whether it is true in $(M,G)$ that $a^k h \in G^{[n]}$ or not.  Further let
$p'$ be the type over $\cl{T}(A',H')$ corresponding to $p$ via $\beta$. We want to find an
$a' \in M'$ such that $a'$ realizes $p'$. By compactness and saturation of
$(M',G')$, it is enough to show that finitely many formulas of
$p'$ can be satisfied. By o-minimality of $T$, this reduces to find an $a' \in M'$ with
\begin{equation}\label{conditionona}
(M',G') \models \beta(c) < a' < \beta(d) \wedge \bigwedge_{i=1}^n \phi_i(a'),
\end{equation}
for every $c,d \in \cl{T}(A,H)$ with $c<a<d$ and every finite collection of formulas $\phi_1,\dots,\phi_n$ of the form
\eqref{pureness} or \eqref{pureness2} with $(M,G)\models \bigwedge_{i=1}^n \phi_i(a)$.
By Lemma \ref{unionofcosets}, the set
$$Y:=\{ g \in G' \ : \ (M',G')\models \bigwedge_{i=1}^n \phi_i(g)\}$$
is a finite union of cosets of $G'^{[s]}$ in $G'$ for some $s \in \mathbb{N}$. Since $G'^{[s]}$ is dense in $G'$, we have that $Y$ is dense in $G'$ as well. Since $G'$ is dense in $M'$, we have that $Y\cap \big(\beta(c),\beta(d)\big)$ is dense in $\big(\beta(c),\beta(d)\big)$. Now take any $a' \in Y \cap \big(\beta(c),\beta(d)\big)$. This $a'$ satisfies \eqref{conditionona}.\\
By definition, $H_G\langle a \rangle$ and $H'_{G'}\langle a' \rangle$ are the smallest pure subgroups of $G$ and $G'$ containing $H\cup \{a\}$ and $H' \cup \{a'\}$ respectively. Let $\tilde\beta$ be the $\mathfrak{L}_{\Gamma}$-isomorphism which extends $\beta$ to $\cl{T}(A,H,a)$ and maps $a$ to $a'$. By conditions
\eqref{pureness} and \eqref{pureness2} we get for every $h\in G$ that $h \in H_G\langle a \rangle$ if and only if
$\tilde\beta(h) \in H'_{G'}\langle a' \rangle$. Hence $\tilde\beta$ is an isomorphism of
$\big(\cl{T}(A,H,a),H_G\langle a \rangle\big)$ and
$\big(\cl{T}(A',H',a'),H'_{G'}\langle a' \rangle\big)$ and $\tilde\beta \in \mathcal{S}$.\\
\underline{Case 2:} $a \in \cl{T}(A,G)$.\\
Let $g_1,\dots,g_n \in G$ be such that $a \in \cl{T}(A,\{g_1,\dots,g_n\}\big)$.
By applying the previous case $n$ times, we get a $\tilde\beta \in \mathcal{S}$ such that
$g_1,\dots,g_n \in \operatorname{dom}(\tilde\beta)$ and $A \subseteq \operatorname{dom}(\tilde\beta)$. Since
$\operatorname{dom}(\tilde\beta)$ is a model of $T_{\Gamma}$, we have $a\in \operatorname{dom}(\tilde\beta)$ with $\tilde\beta\in \mathcal{S}$.\\
\underline{Case 3:} $a \notin \cl{T}(A,G)$.\\
Let $C$ be the cut of $a$ in $\cl{T}(A,H)$ and let $C'$ be the
corresponding cut of $C$ under $\beta$ in $\cl{T}(A',H')$. By
saturation, we can assume that there are $p,q \in M'$ such
that every element in the interval $(p,q)$ realizes the cut $C'$.
Let $d \in M^{|A|}$ be the set $A$ written as a tuple. Let
$f_1,\dots,f_n$ be $\emptyset$-definable functions in the language $\mathfrak{L}_{\Gamma}$. By (A6), we
know that there exists $b \in (p,q)$ such that for $i=1,\dots,n$ and
every tuple $g_1,\dots,g_l$ of elements of $G'$
\begin{equation*}
f_i(g_1,\dots,g_l,d) \neq b.
\end{equation*}
 Thus by saturation, there is an $a' \in (p,q)$ such that $a'
\notin \cl{T}(A',G')$. Since $a'$ realizes the cut $C'$, there is an
$\mathfrak{L}_{\Gamma}$-isomorphism $\tilde\beta$ from $\cl{T}(A,a,H)$ to
$\cl{T}(A',a',H')$ extending $\beta$ and sending $a$ to $a'$. Since $a
\notin \cl{T}(A,G)$ and $a' \notin \cl{T}(A',G')$, we get that
\begin{equation*}
\cl{T}(A,a,H) \cap G = H \textrm{ and } \cl{T}(A',a',H')\cap G' =
H'.
\end{equation*}
Since $\beta(H)=H'$ and $\tilde\beta$ extends $\beta$, we get that
$\tilde\beta$ is an $\mathfrak{L}_{\Gamma}(U)$-isomorphism from $(\cl{T}(A,a,H),H)$ to
$(\cl{T}(A',a',H'),H')$ with $\tilde\beta(A\cup \{a\})=A'\cup\{a'\}$. Thus we
have that $\tilde\beta \in \mathcal{S}$.
\end{proof}

\begin{thm} Assume Condition \ref{schanuel}. Then $\tilde{T}$ is complete.
\end{thm}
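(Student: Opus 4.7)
The plan is to deduce completeness from Lemma~\ref{bnf} via the standard observation that a nonempty back-and-forth system between two structures forces them to be elementarily equivalent. Let $(M,G)$ and $(M',G')$ be any two models of $\tilde{T}$; replacing each by a $(|\Gamma|)^+$-saturated elementary extension does not change its theory, so it suffices to exhibit a single element of $\mathcal{S}$ between the saturated pair.

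For the initial element, I would set $A=A'=\emptyset$, take $H$ to be $\{\dot{\gamma}^{M}:\gamma\in\Gamma\}$ viewed as a pure subgroup of $G$ (which it is by axiom (A2)), and let $H'$ be its counterpart in $G'$. Axiom (A1) says that $M$ and $M'$ are both models of $T_{\Gamma}$, which is complete by construction since it was defined as the $\mathfrak{L}_{\Gamma}$-theory of a specific structure, so $M\equiv_{\mathfrak{L}_{\Gamma}}M'$. The $\mathfrak{L}_{\Gamma}$-definable closure of the constants is $\cl{T}(H)$ on one side and $\cl{T}(H')$ on the other, so completeness of $T_{\Gamma}$ supplies a natural $\mathfrak{L}_{\Gamma}$-elementary isomorphism $\beta:\cl{T}(H)\to\cl{T}(H')$ sending $\dot{\gamma}^{M}$ to $\dot{\gamma}^{M'}$.

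To upgrade $\beta$ to an $\mathfrak{L}_{\Gamma}(U)$-isomorphism of the associated pair-structures, one needs $\cl{T}(H)\cap G = H$ and $\cl{T}(H')\cap G' = H'$, which is exactly Lemma~\ref{mainlemma} applied to each side. Conditions~(2) and~(3) in the definition of $\mathcal{S}$ then reduce to trivialities: $\emptyset$ is vacuously $\cl{T}$-independent over $G$, and $\Gamma$ is pure in itself. Hence $\beta\in\mathcal{S}$, and Lemma~\ref{bnf} delivers $(M,G)\equiv(M',G')$ as $\mathfrak{L}_{\Gamma}(U)$-structures, proving completeness.

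The only nontrivial step is the $U$-compatibility of the initial map: a priori $\cl{T}(H)$ could absorb elements of $G\setminus H$ and make $\beta$ fail to respect the predicate. This is precisely what Lemma~\ref{mainlemma} forbids, and that lemma is where the real content lives. Condition~\ref{schanuel} need not be re-invoked here; it was absorbed once and for all into axiom~(A5), which in turn powers Lemma~\ref{mainlemma}.
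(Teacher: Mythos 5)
Your argument is correct and follows exactly the route the paper takes: the paper's proof reduces completeness to non-emptiness of $\mathcal{S}$ and notes that ``the identity map on $\cl{T}(\Gamma)$ belongs to $\mathcal{S}$'', which is precisely the element you construct (with $A=A'=\emptyset$, $H,H'$ the constant interpretations, and Lemma~\ref{mainlemma} supplying $U$-compatibility). You have simply unpacked the paper's terse final sentence; the reduction to saturated models and the use of Lemma~\ref{bnf} are the same.
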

\begin{proof}
Let $(M,G)$ and $(M',G')$ be two $|\Gamma|^{+}$-saturated models of $\tilde{T}$, and let $\mathcal{S}$ be as above. It only remains to show that $\mathcal{S}$ is non-empty. But it is easy to see that the identity map on $\cl{T}(\Gamma)$ belongs to $\mathcal{S}$.
 \end{proof}

\end{subsection}

\subsection{Quantifier elimination} In this subsection Theorem \ref{nearMC2} is finally proved (see page \pageref{nearMC2} for the statement).
\begin{proof}[Proof of Theorem \ref{nearMC2}] Let $(M,G)$ and $(M',G')$ be two $|\Gamma|^{+}$-saturated models of $\tilde{T}^{+}$ and let  $\mathcal{S}$ be the back-and-forth system between $(M,G)$ and $(M',G')$ constructed above.
Also take $a=(a_1,\dots,a_n) \in M^n$
and $b=(b_1,\dots,b_n) \in (M')^n$ satisfying the same
quantifier-free $\mathfrak{L}_{\Gamma}(U)^+$-type.  In order to prove quantifier elimination, we just need to find $\tilde\beta \in \mathcal{S}$ sending $a$ to $b$.
 Without loss of generality, we can assume
that $a_1,\dots,a_r$ are maximally
independent over $G$ in respect to the pregeometry $\cl{T}$. Since $a$ and $b$ have the same $\mathfrak{L}_{\Gamma}(U)^+$-type, we get that
$b_1,\dots,b_r$ are independent over $G'$ in respect to the pregeometry $\cl{T}$.  Let $\beta$ be the $\mathfrak{L}_{\Gamma}$-isomorphism between
$\cl{T}(\{a_1,\dots,a_r\},\Gamma)$ and $\cl{T}(\{b_1,\dots,b_r\},\Gamma)$. We will now show that $\beta$ extends to an isomorphism $\tilde\beta$ in the back-and-forth-system $\mathcal{S}$
sending $a$ to $b$. Let $g_1,\dots,g_l \in G$ be such that $a_{r+1},\dots,a_n$ are in $\cl{T}(\{a_1,\dots,a_r,g_1,\dots,g_l\},\Gamma)$.
Let $p(x_1,\dots,x_l)$ be the $\mathfrak{L}_{\Gamma}(U)$-type consisting of the
$\mathfrak{L}_{\Gamma}$-type of $(g_1,\dots,g_l)$ over $\cl{T}(\{a_1,\dots,a_r\},\Gamma)$ and for every $k_1,\dots,k_l\in\Z$, $s\in\N$ and $\gamma\in\Gamma$ one of the formulas
\begin{align}\label{qepureness}
x_1^{k_1}\cdot ... \cdot x_l^{k_l} \cdot \gamma &\in G^{[s]},\\
x_1^{k_1}\cdot ... \cdot x_l^{k_l} \cdot \gamma &\notin G^{[s]},
\label{qepureness2}
\end{align}
depending on whether $g_1^{k_1}\cdot ... \cdot g_l^{k_l} \cdot \gamma \in G^{[s]}$. Let
$p'$ be the type corresponding to $p$ under $\beta$. We want to find
$h_1,\dots,h_l \in G'$ satisfying $p'$. By compactness and saturation of
$(M',G')$, it is enough to show that every finite subset of
$p'$ can be realized. So let $\psi(x,b_1,\dots,b_r)$ be an $\mathfrak{L}_{\Gamma}$-formula  in $p'$ and $\chi_1(x,b_1,\dots,b_r),\dots,\chi_t(x,b_1,\dots,b_r)$ be finitely many formulas in $p'$ of the form \eqref{qepureness} or \eqref{qepureness2}. Put $\chi=\bigwedge_{i=1}^{t}\chi_i$. By Lemma \ref{unionofcosets}, the set
$$Y:=\big\{ (h_1,\dots,h_l) \in G'^{l} \ : \ (M',G')\models\chi(h_1,\dots,h_l,b_1,\dots,b_r))\big\}$$
is a finite union of cosets of $(G'^{[s]})^l$ in $(G')^l$ for some $s \in \mathbb{N}$. So the formula $\chi_i(x,b_1,\dots,b_r)$ is equivalent to an atomic $\mathfrak{L}_{\Gamma}(U)^{+}$-formula. Hence the formula $\psi\wedge \chi$ is also of this form. Thus
\begin{equation*}
\exists y_1 \cdots \exists y_l \bigwedge_{i=1}^{l} U(y_i) \wedge \psi(y_1,\dots,y_l,b_1,\dots,b_r) \wedge \chi(y_1,\dots,y_l,b_1,\dots,b_r)
\end{equation*}
is equivalent to a quantifier-free $\mathfrak{L}_{\Gamma}(U)^{+}$-formula. Since $(a_1,\dots,a_r)$ and $(b_1,\dots,b_r)$ have the same quantifier-free $\mathfrak{L}_{\Gamma}(U)^{+}$-type, the formula
\begin{equation*}
\exists y_1 \cdots \exists y_l \bigwedge_{i=1}^{l} U(y_i) \wedge \psi(y_1,\dots,y_l,b_1,\dots,b_r) \wedge \chi(y_1,\dots,y_l,b_1,\dots,b_r)
\end{equation*}
holds in $(M',G')$. So $p'$ is finitely satisfiable. Now let $h_1,\dots,h_l \in G'$ realize $p'$. Then $\beta$ extends to an $\mathfrak{L}_{\Gamma}$-isomorphism
$$\tilde\beta:\\ \cl{T}\big(\{a_1,\dots,a_r,g_1,\dots,g_l\},\Gamma\big)\to\cl{T}\big(\{b_1,\dots,b_r,h_1,\dots,h_l\},\Gamma\big).$$
By the construction of $g_1,\dots,g_l$ and $h_1,\dots,h_l$, we have that
\begin{equation*}
g_1^{k_1} \cdot ... \cdot g_l^{k_l} \gamma \in G^{[s]} \textrm{ if and only if } h_1^{k_1} \cdot ... \cdot h_l^{k_l} \gamma \in G'^{[s]}
\end{equation*}
for all $k_1,\dots,k_l \in \mathbb{Z}$, $s \in \mathbb{N}$ and $\gamma \in \Gamma$. Hence $\tilde\beta$ is an $\mathfrak{L}_{\Gamma}(U)$-isomorphism of  $$\big(\cl{T}(\{a_1,\dots,a_r,g_1,\dots,g_l\},\Gamma),\Gamma_G \< g_1,\dots,g_l\> \big)\text{ and }$$ $$\big(\cl{T}(\{b_1,\dots,b_r,h_1,\dots,h_l\},\Gamma),\Gamma_{G'}\<h_1,\dots,h_l\>\big).$$
Hence $\tilde\beta \in \mathcal{S}$.
\end{proof}

\subsection{Induced structure and open core} In this subsection it will be shown that every open definable set in
$(\overline{\R},x^{\tau},\tau,\Gamma)$ is already definable in the reduct $(\overline{\R},x^{\tau},\tau)$. This establishes the second part of Theorem A. We use the following instance of \cite{ayhanme2}, Theorem 5.2.

\begin{thm}\label{ocayhan} Suppose that for every model $(M,G)\models \tilde{T}$,
\begin{itemize}
\item for every finite $B\subseteq M$ such that $B\setminus G$ is $\cl{T}$-independent over $G$ and
\item for every
 set $X \subseteq G^n$ definable in $(M,G)$ with parameters from $B$,
\end{itemize}
the topological closure $\overline{X}$ of $X$ is definable in $M$ over $B$. Then every open set definable in $(\overline{\R},x^{\tau},\tau,\Gamma)$ is already definable in $(\overline{\R},x^{\tau},\tau)$.
\end{thm}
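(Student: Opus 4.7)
The plan is to leverage the quantifier elimination of Theorem \ref{nearMC2} to decompose any open definable set into reduct-definable pieces and projections of sets from $G^n$, and then to invoke the hypothesis to control closures. Start with an open $\mathfrak{L}_{\Gamma}(U)$-definable set $V \subseteq \R^m$. By Theorem \ref{nearMC2}, $V$ is defined by a quantifier-free $\mathfrak{L}_{\Gamma}(U)^{+}$-formula $\theta(x;\bar c)$, and each atomic subformula is either an $\mathfrak{L}_{\Gamma}$-formula or a predicate $U_{\varphi}$ unpacking to $\exists y_1,\dots,y_n\,\bigl(\bigwedge U(y_i)\wedge \psi(x,y)\bigr)$ with $\psi$ an $\mathfrak{L}_{\Gamma}$-formula. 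Thus $V$ is a boolean combination of sets reduct-definable over $\bar c\cup\Gamma$ and projections $\pi_i(S_i\cap(\R^m\times \Gamma^{n_i}))$ of reduct-definable sets $S_i$.

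Next, I would pass to a sufficiently saturated elementary extension $(M,G)\succeq(\overline{\R},x^\tau,\tau,\Gamma)$ and enlarge the parameter tuple $\bar c$ to a finite set $B\subseteq M$ in which $B\setminus G$ is $\cl{T}$-independent over $G$. This can be arranged by iteratively replacing any parameter lying in $\cl{T}\bigl(G\cup(B\setminus\{b\})\bigr)$ either by a suitable element of $G$ --- via the Steinitz exchange principle (iv) from the o-minimality subsection --- or by an independent representative, without enlarging the class of sets definable over $B$. For each building block $E_i = \{(x,y)\in M^{m+n_i} : \psi_i(x,y,B)\}\cap (M^m\times G^{n_i})$, I would apply the hypothesis to vertical fibres $E_i\cap(\{x\}\times G^{n_i})$ and, after swapping the roles of $x$ and $y$, to horizontal slices, thereby obtaining reduct-definable descriptions over $B$ for the closures of the relevant pieces.

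Since $V$ is open we have $V = \operatorname{int}(\overline V)$, so assembling the closures of the atomic pieces by boolean operations and taking interiors expresses $V$ as reduct-definable over $B$ in $M$, and by elementarity as reduct-definable over $\bar c$ in $(\overline{\R},x^\tau,\tau)$. The principal obstacle is the interaction between existential quantification over $G^n$ (a projection operation) and topological closure: the hypothesis controls closures \emph{inside} $G^n$, whereas $V$ is a projection into $M^m$. Bridging this gap --- showing that $\overline{\pi_i(E_i)}$ is reduct-definable once closures of definable subsets of $G^{n_i}$ are --- is the technical heart of the argument, and is where the o-minimality of the reduct (in particular cell decomposition) and the density axiom (A6) must be used to transfer closure information across projections.
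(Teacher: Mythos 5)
This theorem is not proved in the paper at all: it is imported verbatim as ``the following instance of \cite{ayhanme2}, Theorem 5.2,'' and the actual work done in this section (Lemma \ref{induced} and Proposition \ref{induced-corollary}) is to verify that its hypothesis holds. Your sketch instead attempts to re-derive the imported theorem from quantifier elimination, which is a legitimate ambition, but as written it does not go through.

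Two concrete problems. First, the step ``since $V$ is open we have $V=\operatorname{int}(\overline V)$'' is false: for $V=(-1,0)\cup(0,1)\subseteq\R$ one has $\operatorname{int}(\overline V)=(-1,1)\neq V$. Not every open set is regular open, and nothing in the setting forces definable open sets to be, so even if $\overline V$ were known to be reduct-definable you could not recover $V$ by taking its interior. Second, and as you yourself flag, there is a genuine gap between what the hypothesis supplies --- reduct-definable closures of definable subsets of $G^n$ --- and what you actually need, namely a reduct-definable description of the closure of a projection $\pi_i\bigl(E_i\cap(M^m\times G^{n_i})\bigr)\subseteq M^m$. Applying the hypothesis to the vertical fibres $E_i\cap(\{x\}\times G^{n_i})$ controls closures only fibrewise, and closure does not commute with projection, so an additional uniformity argument is needed to assemble the fibrewise information into a reduct-definable set whose closure agrees with $\overline{\pi_i(\cdot)}$. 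That bridging argument is precisely the substance of \cite{ayhanme2}, Theorem 5.2 --- the result you are implicitly re-proving --- and declaring it ``the technical heart of the argument'' while leaving it open means the proof is not there. The route the paper actually takes is to treat Theorem \ref{ocayhan} as a black box and concentrate on establishing its hypothesis, which is what Lemma \ref{induced} and Proposition \ref{induced-corollary} accomplish.
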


\noindent In the remainder it will be shown that the assumption of Theorem \ref{ocayhan} holds. Therefor let $(M,G)$ be a model of $\tilde{T}$ and let $B$ be a finite subset of $M$ such that $B\setminus G$ is $\cl{T}$-independent over $G$.

\begin{lem}\label{induced}
Let $X \subseteq G^n$ be definable in $(M,G)$ with parameters from $B$. Then $X$ is a finite union of sets of the form
\begin{equation}\label{inducedform}
E \cap \bigcup_{i=1}^{l} \gamma_i \cdot (G^{[s]})^n.
\end{equation}
where $E\subseteq M^{n}$ is $\mathfrak{L}_{\Gamma}$-$B$-definable, $\gamma_1,\dots,\gamma_l\in\Gamma^n$ and $s\in\N$.
\end{lem}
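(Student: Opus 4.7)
The plan is to combine quantifier elimination from Theorem \ref{nearMC2} with the type characterisation implicit in the back-and-forth system $\mathcal{S}$ of Lemma \ref{bnf}, and then convert the resulting description into the claimed form using axiom (A3) and Lemma \ref{unionofcosets}.

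First, I would set $A := B \setminus G$ (which is $\cl{T}$-independent over $G$ by hypothesis) and let $H := \Gamma_{G}\langle B \cap G\rangle$, the smallest pure subgroup of $G$ containing $\Gamma$ together with $B \cap G$. Corollary \ref{mainlemmacor} then yields $\cl{T}(A,H) \cap G = H$, so that the identity map on $\cl{T}(A,H)$ lies in $\mathcal{S}$ and $(\cl{T}(A,H),H)$ is an $\mathfrak{L}_{\Gamma}(U)$-substructure of $(M,G)$ containing $B$.

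The key technical claim I would establish is that, for tuples $x,x' \in G^{n}$, sameness of $\mathfrak{L}_{\Gamma}(U)^{+}$-type over $B$ follows from (i) sameness of the $\mathfrak{L}_{\Gamma}$-type over $\cl{T}(A,H)$ and (ii) coincidence of the truth-values of all formulas $\chi_{k}(x)\cdot h \in G^{[s]}$ for $k \in \Z^{n}$, $h \in H$, $s \in \N$. The argument is to iterate Case 1 of the back-and-forth of Lemma \ref{bnf} coordinate by coordinate: the data in (ii) are precisely what is needed to match cosets at each step and thus to extend the identity on $\cl{T}(A,H)$ to an element of $\mathcal{S}$ sending $x$ to $x'$. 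Theorem \ref{nearMC2} then promotes this to equality of $\mathfrak{L}_{\Gamma}(U)^{+}$-types over $B$.

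With this type characterisation, a standard compactness argument in a $|\Gamma|^{+}$-saturated elementary extension shows that $X$ is a boolean combination of finitely many sets cut out by conditions of the two kinds. Conditions of type (i) cut out $\mathfrak{L}_{\Gamma}$-definable sets with parameters from $\cl{T}(A,H) \subseteq \cl{T}(B)$, so they are in particular $\mathfrak{L}_{\Gamma}$-$B$-definable. Conditions of type (ii), namely the sets $\{x \in G^{n} : \chi_{k}(x)\cdot h \in G^{[s]}\}$, are finite unions of cosets of $(G^{[s]})^{n}$ in $G^{n}$; by axiom (A3) the coset representatives can be chosen from $\Gamma^{n}$. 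Lemma \ref{unionofcosets} then consolidates any boolean combination of these coset pieces into a single finite union $\bigcup_{i=1}^{l}\gamma_{i}\cdot (G^{[s]})^{n}$ with common $s$, and distributing over the $\mathfrak{L}_{\Gamma}$-factors gives the claimed decomposition \eqref{inducedform}.

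The main obstacle I anticipate is the type-determination step. When iterating Case 1 of Lemma \ref{bnf}, at stage $j$ the pure subgroup involved enlarges from $H$ to $H_{G}\langle x_{1},\ldots,x_{j}\rangle$, and the coset conditions that must be matched at that stage involve this enlarged group. The verification reduces to the observation that any element of $H_{G}\langle x_{1},\ldots,x_{j}\rangle$ has the form $(h \cdot x_{1}^{k_{1}}\cdots x_{j}^{k_{j}})^{1/m}$ for some $h \in H$ and exponents $k_{i},m \in \Z$ with $m>0$, so the required coset memberships at stage $j$ reduce to instances of the conditions in (ii) evaluated at appropriately higher powers, which are already part of the assumed data.
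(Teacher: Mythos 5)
Your proposal is correct and takes essentially the same route as the paper's proof: both fix $g,g'\in G^n$ satisfying the same $\mathfrak{L}_{\Gamma}$-$B$-formulas together with the same $\Gamma$-coset conditions, produce a $\beta\in\mathcal{S}$ fixing $B$ and sending $g$ to $g'$ (using Corollary \ref{mainlemmacor}, axiom (A3) and Lemma \ref{unionofcosets}), and finish by saturation and compactness. The only cosmetic difference is that the paper constructs $\beta$ in one step as an $\mathfrak{L}_{\Gamma}$-isomorphism $\cl{T}(g,B,\Gamma)\to\cl{T}(g',B,\Gamma)$ and verifies $\beta(\Gamma_{G}\langle g,S\rangle)=\Gamma_{G}\langle g',S\rangle$ directly, whereas you iterate Case~1 of Lemma \ref{bnf} coordinate by coordinate; your reduction of the stage-$j$ coset conditions to higher-power instances of (ii) is fine, though it silently uses that $M_{>0}$ is torsion-free (so $x^{k}h\in G^{[s]}$ iff $(x^{k}h)^{m}\in G^{[sm]}$), which is worth saying explicitly.
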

\begin{proof} We may assume that $(M,G)$ is a $|\Gamma|^{+}$-saturated model of $\tilde{T}$. By our assumption, $B$ is a union of a finite set $S \subseteq G$ and a set $A\subseteq M$ which is $\cl{T}$-independent over $G$.
Let $\mathcal{S}$ be the back-and-forth
system of partial $\mathfrak{L}_{\Gamma}(U)$-isomorphisms between $(M,G)$ and itself constructed above. Take $g,g'\in G^n$ such that
for every $E\subseteq M^{n}$ $\mathfrak{L}_{\Gamma}$-definable over $B$, $\gamma_1,\dots,\gamma_l\in\Gamma^n$ and $s\in\N$ we have that
\begin{equation}\label{inducedeq}
g\in E \cap \bigcup_{i=1}^{l} \gamma_i (G^{[s]})^n\Leftrightarrow g'\in E \cap \bigcup_{i=1}^{l} \gamma_i (sG^{[s]})^n.
\end{equation}
By Lemma \ref{unionofcosets} and (A3), the collection of finite union of sets of the form \eqref{inducedform} is closed under boolean operations. Hence it suffices to show that there is $\beta\in \mathcal{S}$ fixing $B$ such that $\beta$ maps $g$ to
$g'$. Since $g$ satisfies all $\mathfrak{L}_{\Gamma}$-formulas over $B$ which are satisfied by $g$, there is an $\mathfrak{L}_{\Gamma}$-isomorphism from $\cl{T}(g,B,\Gamma)$ to $\cl{T}(g',B,\Gamma)$ fixing $B\cup \Gamma$ and mapping $g$ to $g'$. We now show that $\beta \in \mathcal{S}$. Since $B=S\cup A$, it is only left to prove that $\beta(\Gamma\langle g,S \rangle) = \Gamma\langle g',S\rangle$. Since $\beta$ maps $g$ to $g'$ and fixes $S$, it is enough to show for all $h \in \Gamma_G\langle S \rangle^n$, $k \in \mathbb{Z}^n$ and $s \in \mathbb{N}$ that
\begin{equation*}
g \in h \cdot D_{k,s} \textrm{ if and only if } g' \in h \cdot  D_{k,s}.
\end{equation*}
By Lemma \ref{unionofcosets} and (A3), there is are $\gamma_1,\dots,\gamma_{l_1},\delta_1,\dots,\delta_{l_2} \in \Gamma^n$ such that \newline $h \cdot D_{k,s}= \bigcup_{i=1}^{l_1} \gamma_i (G^{[s]})^n$ and $G^n\setminus (h \cdot \gamma D_{k,s}) = \bigcup_{i=1}^{l_2} \delta_i (G^{[s]})^n$. By \eqref{inducedeq}, we have $g \in h \cdot D_{k,s}$ if and only if $g' \in h \cdot D_{k,s}$. Hence $\beta(\Gamma\langle g,S \rangle) = \Gamma\langle g',S\rangle$ and $\beta \in \mathcal{S}$.
\end{proof}

\begin{prop}\label{induced-corollary}Let $X \subseteq G^n$ be definable in $(M,G)$ with parameters from $B$. Then the topological closure $\overline{X}$ of $X$ is definable in $M$ over $B$.
\end{prop}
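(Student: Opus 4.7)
The plan is to proceed by induction on $n$. The case $n = 0$ is trivial, since $G^0$ is a singleton and any subset agrees with its closure. For the inductive step, I apply Lemma \ref{induced} to write $X$ as a finite union of sets of the form $E \cap Y$, where $E \subseteq M^n$ is $\mathfrak{L}_\Gamma$-$B$-definable and $Y = \bigcup_{i=1}^{l} \gamma_i \cdot (G^{[s]})^n$ is a finite union of cosets of $(G^{[s]})^n$. Since topological closure commutes with finite unions, it suffices to treat a single piece $X_0 = E \cap Y$.

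The crucial preliminary fact is that $Y$ is dense in $M_{>0}^n$. Indeed, $G$ is dense in $M_{>0}$ by (A2) and $G^{[s]}$ has finite index in $G$ by (A3); a finite-index subgroup of a dense subgroup of $M_{>0}$ is itself dense, since otherwise a nonempty interval missed by $G^{[s]}$ would, under multiplication by coset representatives, yield intervals missed by all of $G$. Taking products and $\gamma_i$-translates, each coset $\gamma_i \cdot (G^{[s]})^n$, and hence $Y$ itself, is dense in $M_{>0}^n$.

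Next I would invoke $\mathfrak{L}_\Gamma$-cell decomposition over $B$ to write $E = C_1 \sqcup \cdots \sqcup C_m$ and compute $\overline{C_j \cap Y}$ cell by cell. If $\dim C_j = n$, the cell is open in $M^n$, and the density of $Y$ gives $\overline{C_j \cap Y} = \overline{C_j \cap M_{>0}^n}$, which is $\mathfrak{L}_\Gamma$-$B$-definable. If $\dim C_j = k < n$, I choose the coordinate projection $\pi \colon M^n \to M^k$ making $\pi|_{C_j}$ a homeomorphism onto the open cell $\pi(C_j) \subseteq M^k$, so that $C_j$ is the graph of a continuous $\mathfrak{L}_\Gamma$-$B$-definable function $f$ on $\pi(C_j)$. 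Since $Y \subseteq G^n$, the projection $D := \pi(C_j \cap Y)$ lies in $G^k$ and is definable in $(M,G)$ over $B$; by the inductive hypothesis $\overline{D}$ is $\mathfrak{L}_\Gamma$-$B$-definable. Using the continuity of $f$ on $\pi(C_j)$ together with the existence of definable one-sided limits of $f$ at the frontier of $\pi(C_j)$ guaranteed by o-minimal cell theory, one recovers $\overline{C_j \cap Y}$ from $\overline{D}$ and $f$ as an $\mathfrak{L}_\Gamma$-$B$-definable set.

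The main obstacle is the lower-dimensional case: density of $Y$ in $M_{>0}^n$ does not force $C_j \cap Y$ to be dense in $C_j$ when $C_j$ has smaller dimension, so one genuinely needs to pass through the projection to a subset of $G^k$ and apply the inductive hypothesis before reassembling the closure in $M^n$ by means of the continuous function $f$.
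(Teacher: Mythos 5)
Your overall strategy matches the paper's: induction on $n$, reduction via Lemma \ref{induced}, cell decomposition of the definable part, and projection of lower-dimensional cells to feed the inductive hypothesis. However, you miss the crucial reformulation that makes the induction close. The paper does not try to compute $\overline{X}$ directly at each stage; it instead proves by induction the \emph{stronger} statement that $X$ is a dense subset of some $\mathfrak{L}_\Gamma$-$B$-definable set $E \subseteq M^n$ (from which $\overline{X}=\overline{E}$ follows at once). This strengthening is what makes the projection step work painlessly: if $X'$ is dense in $E_1 \subseteq \pi(E_0)$, then applying the homeomorphism $\pi^{-1}\colon \pi(E_0)\to E_0$ immediately gives that $X=\pi^{-1}(X')$ is dense in $\pi^{-1}(E_1)$, because density is preserved under homeomorphisms. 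You stay with the weaker statement ``$\overline{X}$ is definable,'' and as a result you run into exactly the difficulty you flag at the end: from $\overline{D}$ and $f$ you cannot cleanly recover $\overline{C_j\cap Y}$, since $\overline{D}$ may spill over the frontier of $\pi(C_j)$, where $\pi|_{\overline{C_j}}$ fails to be injective and $f$ need not have a single definable limit. Your appeal to ``one-sided limits at the frontier guaranteed by o-minimal cell theory'' is the right instinct only in one variable; in higher dimensions the limit set of $f$ along $D$ near a frontier point is a genuine definable set rather than a single value, and you give no argument for reassembling $\overline{C_j\cap Y}$ from these pieces. So as written there is a real gap in the lower-dimensional case, precisely the case the paper's reformulation was designed to handle.

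A smaller point: your justification that $G^{[s]}$ is dense is flawed. Multiplying an interval missed by $G^{[s]}$ by coset representatives produces \emph{different} intervals, each missed by a single coset, not a single interval missed by all of $G$, so no contradiction with density of $G$ arises that way. The standard argument is that a non-discrete subgroup of $(M_{>0},\cdot)$ cannot have a finite-index subgroup that is discrete (a finite union of discrete cosets is discrete), together with the dichotomy discrete/dense; the paper simply takes the density of $G^{[s]}$ for granted (as it also does in the proof of Lemma \ref{bnf}).

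To repair your proof with minimal change: replace the goal ``$\overline{X}$ is $\mathfrak{L}_\Gamma$-$B$-definable'' by ``$X$ is dense in some $\mathfrak{L}_\Gamma$-$B$-definable $E\subseteq M^n$'' throughout the induction, keep your open-cell case as is (there $X$ is dense in $C_j\cap M_{>0}^n$), and in the lower-dimensional case obtain from the inductive hypothesis an $E_1\subseteq\pi(C_j)$ with $D$ dense in $E_1$, then set $E:=\pi^{-1}(E_1)$ and use continuity of $\pi^{-1}$; no frontier analysis is then needed.
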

\begin{proof} We prove that there is an $\mathfrak{L}_{\Gamma}$-$B$-definable set $E\subseteq M^{n}$  such that $X$ is a dense subset of $E$. We do this by induction on $n$. The case $n=0$ is trivial. So let $n>0$. By Lemma \ref{induced} we may assume that there exists an $\mathfrak{L}_{\Gamma}$-$B$-definable set $E_0$ and an $\mathfrak{L}_{\Gamma}(U)$-$\emptyset$-definable set $D_0$ which is dense in $G^n$ such that $X=E_0 \cap D_0$. Without loss of generality, we can assume that $E_0$ is a cell.
First consider the case that $E_0$ is open. Then $X$ is dense in $E_0$. Now consider the case that there is a projection $\pi : M^{n} \to M^m$ such that $m<n$ and $\pi$ is homeomorphism of $E_0$ onto its image and $\pi(E_0)$ is open.
Consider the set
\begin{align*}
X':= \{ (g_1,\dots,g_{m}) \in G^m \cap \pi(E_0) \ : \ \pi^{-1}(g_1,\dots,g_{m}) \in D_0\}.
\end{align*}
By the induction hypothesis, there is an $\mathfrak{L}_{\Gamma}$-$B$-definable subset $E_1$ of $\pi(E_0)$  such that $X'$ is dense in $E_1$. By continuity of $\pi^{-1}$, the image of $X'$ under $\pi^{-1}$ is dense in the image of $E_1$ under $\pi^{-1}$. Set $E:=\pi^{-1}(E_1)$. Since $X=\pi^{-1}(X')$, we have that $X$ is dense $E$.
\end{proof}

\subsection{Proof of Theorem B} As mentioned above, the proof of Theorem B, ie. the case $\tilde{T}=T_{\Gamma}(\Delta)$, is almost exactly the same as the proof of Theorem A. One only needs to replace `$H$ is a pure subgroup of $G$' by `$H$ is a $\qt$-linear subspace of $G$' in the statement of Lemma \ref{mainlemma} and the definition of the back-and-forth system $\mathcal{S}$, and adjust the proof of Lemma \ref{bnf} and Theorem \ref{nearMC2} accordingly.

\end{section}

 

\end{document}